\def\imod#1{\allowbreak\mkern10mu({\operator@font mod}\,\,#1)}
\theoremstyle{plain}
\newtheorem*{thm*}{Theorem}
\newtheorem{thm}{Theorem}
\newtheorem{lem}[thm]{Lemma}
\newtheorem{prop}[thm]{Proposition}
\newtheorem*{claim*}{Claim}
\newtheorem{cor}[thm]{Corollary}
\newtheorem{conjecture}[thm]{Conjecture}
\theoremstyle{definition}
\newtheorem{problem}[thm]{Problem}
\theoremstyle{remark}
\def\N{\mathbb{N}}
\theoremstyle{plain}
\begin{document}

\title{Frustrated Triangles}

\author[T. Kittipassorn]{Teeradej Kittipassorn}
\address{Department of Mathematical Sciences, University of Memphis, Memphis TN 38152, USA}
\email{t.kittipassorn@memphis.edu}

\author[G. M\'esz\'aros]{G\'abor M\'esz\'aros}
\address{Department of Mathematics, Central European University, Budapest 1051, Hungary}
\email{meszaros\_gabor@ceu-budapest.edu}

\date{18 February 2015}
\subjclass[2010]{Primary 05C35; Secondary 05C15}

\begin{abstract}
A triple of vertices in a graph is a \emph{frustrated triangle} if it induces an odd number of edges. We study the set $F_n\subset[0,\binom{n}{3}]$ of possible number of frustrated triangles $f(G)$ in a graph $G$ on $n$ vertices. We prove that about two thirds of the numbers in $[0,n^{3/2}]$ cannot appear in $F_n$, and we characterise the graphs $G$ with $f(G)\in[0,n^{3/2}]$. More precisely, our main result is that, for each $n\geq 3$, $F_n$ contains two interlacing sequences $0=a_0\leq b_0\leq a_1\leq b_1\leq \dots \leq a_m\leq b_m\sim n^{3/2}$ such that $F_n\cap(b_t,a_{t+1})=\emptyset$ for all $t$, where the gaps are $|b_t-a_{t+1}|=(n-2)-t(t+1)$ and $|a_t-b_t|=t(t-1)$. Moreover, $f(G)\in[a_t,b_t]$ if and only if $G$ can be obtained from a complete bipartite graph by flipping exactly $t$ edges/nonedges. On the other hand, we show, for all $n$ sufficiently large, that if $m\in[f(n),\binom{n}{3}-f(n)]$, then $m\in F_n$ where $f(n)$ is asymptotically best possible with $f(n)\sim n^{3/2}$ for $n$ even and $f(n)\sim \sqrt{2}n^{3/2}$ for $n$ odd. Furthermore, we determine the graphs with the minimum number of frustrated triangles amongst those with $n$ vertices and $e\leq n^2/4$ edges.
%On the other hand, we show, for all $n$ sufficiently large, that if $m\in[2\sqrt{2}n^{3/2},\binom{n}{3}-2\sqrt{2}n^{3/2}]$ and $(n+1)m$ is even, then $m\in F_n$.
\end{abstract}

\maketitle

%%%%%%%%%%%%%%%%%%%%%%%%%%%%%%%%%%%%%%%%%%%%%%%%%%%%%%%%%%%%%%%%%%%%%%%%%%%%%%%%%%%%%%%%%%%%%%%%%%%

\section{Introduction}
\label{intro}

Given a graph $G$ on $n$ vertices, let $t_i=t_i(G)$ denote the number of triples of vertices in $G$ inducing $i$ edges for $i=0,1,2,3$. One of the earliest and best-known results on colorings is due to Goodman~\cite{Goodman1959}, who showed that $t_0+t_3$ is asymptotically minimised by the random graph.
%It was observed by Goodman~\cite{Goodman1959} that $t_0+t_3$ can be expressed in terms of $n$ and the degrees, $$t_0+t_3=\frac{1}{2}\sum_{i=1}^n{\left(d_i-\frac{n-1}{2}\right)^2+\frac{n(n-1)(n-5)}{24}},$$ and he determined the minimum of $t_0+t_3$ which was rewritten by Schwenk~\cite{Schwenk1972} as $\binom{n}{3}-\lfloor\frac{1}{2}n\lfloor\frac{1}{4}(n-1)^2\rfloor\rfloor$.
Goodman~\cite{Goodman1985} also conjectured the maximum of $t_0+t_3$ amongst the graphs with a given number of edges, which was later proved by Olpp~\cite{Olpp1996}. More recently, Linial and Morgenstern~\cite{Linial2014} showed that every sequence of graphs with $t_0 + t_3$ asymptotically minimal is $3$-universal. Hefetz and Tyomkyn~\cite{Hefetz2014} then proved that such sequences are $4$-universal, but not necessarily 5-universal, and moreover that any sufficiently large graph $H$ can be avoided by such a sequence.

The minimum number of triangles in a graph with a given number of edges has also been widely investigated. Erd\H{o}s~\cite{Erdos1962} conjectured that a graph with $\lfloor\frac{n^2}{4}\rfloor+k$ edges contains at least $k\lfloor\frac{n}{2}\rfloor$ triangles if $k<\frac{n}{2}$, which was later proved by Lov\'asz and Simonovits~\cite{Lovasz1983}. More recently, Rozborov~\cite{Razborov2008} determined completely asymptotically the minimum number of triangles in a graph with a given number of edges using flag algebras. Some bounds for other combinations of $t_0,t_1,t_2,t_3$ were also given in~\cite{Goodman1985,Nordhaus1963}, while similar results for three-colored graphs have recently been proved in~\cite{Balogh2014,Cummings2013}.

In this paper, we are interested in another natural quantity $t_1+t_3$. This study is further motivated by a phenomenon occurred in several fields of physics called \emph{geometrical frustration} (see~\cite{fay2007,Toulouse1980,Toulouse1977}). For example, suppose each vertex of a graph is a spin which can take only two values, say up and down, and each edge of the graph is either ferromagnetic (meaning the spins on its end points prefer to be aligned), or anti-ferromagnetic (the spins prefer to be in opposite directions). Now consider a cycle in the graph, and observe that there is no choice of the values of the spins satisfying the preference of every edge of the cycle if and only if there are an odd number of anti-ferromagnetic edges.

%Stre\v{c}ka and \v{C}is\'{a}rov\'{a}~\cite{Strecka2013}.

We say that a triple of vertices of a graph is a \emph{frustrated triangle} if it induces an odd number of edges, i.e. either it contains exactly one edge, or it is a triangle. We shall write $f(G)=t_1+t_3$ for the number of frustrated triangles in a graph $G$. Our aim is to study the set of possible number of frustrated triangles in a graph with $n$ vertices,
	$$F_n=\{f(G):G\text{ is a graph on }n\text{ vertices}\}.$$
We remark that studies of similar flavor have been done, for example, in~\cite{fdelta,canonical} where the object of interest is the set of possible number of colors appearing in a complete subgraph of the complete graph on $\N$.

Clearly, $F_n\subset [0,\binom{n}{3}]$. Note also that $F_n$ is symmetric about the midpoint $\frac{1}{2}\binom{n}{3}$, i.e. $x\in F_n$ iff $\binom{n}{3}-x\in F_n$. This is because a triple is frustrated in $G$ iff it is not frustrated in the complement graph $\overline{G}$, and so $f(G)+f(\overline{G})=\binom{n}{3}$.

One might expect $F_n$ to be the whole interval $[0,\binom{n}{3}]$. However, this is surprisingly very far from the truth. Our main result specifies subintervals of $[0,n^{3/2}]$ that are forbidden from $F_n$, and moreover, we characterise the graphs $G$ with $f(G)\in[0,n^{3/2}]$.

%Our first main result states that, for $f(G)$ small, it has to be contained in the subset $\bigsqcup_{t=0}^{t_{max}}[a_t,b_t]$. Moreover, we know exactly what such graphs look like depending on which interval $[a_t,b_t]$ their number of frustrated triangles lies in.

Before we state the main theorem, let us introduce the following two sequences which play an important role throughout the paper. For $0\leq t\leq n-1$, let $a_t=a^{(n)}_t$ be the number of frustrated triangles in a graph on $n$ vertices containing $t$ edges forming a star. For $0\leq t\leq\frac{n}{2}$, let $b_t=b^{(n)}_t$ be the number of frustrated triangles in a graph on $n$ vertices containing $t$ edges forming a matching. It is immediate that
	$$a_t=t(n-t-1)\text{ and }b_t=t(n-2).$$
Let $t_{\max}=\max\{t:b_t<a_{t+1}\}$ be the last $t$ such that the intervals $[a_t,b_t]$ and $[a_{t+1},b_{t+1}]$ are disjoint. It is easy to check that
	$$t_{\max}=\max\{t:t(t+1)<n-2\}=\left\lceil\sqrt{n-\frac{7}{4}}-\frac{3}{2}\right\rceil\sim\sqrt{n}$$
exists for $n\geq 3$, and
	$$0=a_0= b_0< a_1= b_1< a_2< b_2< \dots < a_{t_{max}}< b_{t_{max}}< a_{t_{max}+1}\sim n^{3/2}.$$
We can now state the main result.

\begin{thm}
\label{main1}
Let $G$ be a graph on $n\geq 3$ vertices.
\begin{enumerate}[(i)]
\item If $f(G)<a_{t_{max}+1}$ then $f(G)\in[a_t,b_t]$ for a unique $t\leq t_{max}$.
\item If $t\leq t_{max}$ then $f(G)\in[a_t,b_t]$ iff $G$ can be obtained from a complete bipartite graph on $n$ vertices by flipping exactly $t$ pairs of vertices.
\end{enumerate}
\end{thm}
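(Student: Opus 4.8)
The plan rests on two elementary facts that together reduce the whole statement to a counting problem about sparse graphs. The first is an exact formula: since every edge lies in $n-2$ triples one has $t_1+2t_2+3t_3=e(G)(n-2)$, and assigning to each triple with at least two edges one of the cherries (pairs of edges sharing a vertex) it contains gives an injection into the set of all pairs of edges; hence
\[
f(G)=e(G)(n-2)-2\bigl(t_2(G)+t_3(G)\bigr),\qquad a_{e(G)}\le f(G)\le b_{e(G)},
\]
where the lower bound is an equality exactly when every two edges meet and $G$ is triangle-free, i.e. when $G$ is a star together with isolated vertices, and the upper bound is an equality exactly when $G$ is a matching together with isolated vertices. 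The second fact is flip-invariance: a complete bipartite graph has no frustrated triangle, and the parity of the number of edges of a triple is additive modulo $2$, so flipping every pair across a vertex cut $\delta(C)$ changes no triple's status; thus $f(G)=f(G\triangle\delta(C))$ for all $C$. Setting $l(G)=\min_C\lvert E(G)\triangle\delta(C)\rvert$ and taking an optimal $C$, we obtain a "reduced" graph $H=E(G)\triangle\delta(C)$ with $e(H)=l(G)$, $f(G)=f(H)$, and no cut more than half-saturated, $e_H(C,V\setminus C)\le\tfrac12\lvert C\rvert\,\lvert V\setminus C\rvert$ for all $C$ (in particular $\Delta(H)\le(n-1)/2$); note that $G$ is obtained from a complete bipartite graph by flipping exactly $t$ pairs, with $t\le t_{\max}<n-1$, if and only if $l(G)=t$.

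These two facts immediately give most of the theorem. If $G$ is a $t$-flip with $t\le t_{\max}$ then $H$ has $t$ edges, so $a_t\le f(G)=f(H)\le b_t$. Conversely, given $G$ with $f(G)<a_{t_{\max}+1}$, write $t=l(G)$; then $f(G)=f(H)\ge a_{e(H)}=a_t$, and since $s\mapsto a_s=s(n-1-s)$ is unimodal and symmetric about $(n-1)/2$, the inequality $a_t<a_{t_{\max}+1}$ forces either $t\le t_{\max}$ — in which case $f(G)\in[a_t,b_t]$, the $t$ being unique because the intervals $[a_s,b_s]$ with $s\le t_{\max}$ are pairwise disjoint — or $t$ to be large, namely $t\ge n-1-t_{\max}$. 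This also settles part (ii): if $f(G)\in[a_t,b_t]$ with $t\le t_{\max}$, then $f(G)<a_{t_{\max}+1}$, so by part (i) $f(G)$ lies in the unique interval $[a_{l(G)},b_{l(G)}]$, forcing $l(G)=t$, i.e. $G$ is exactly a $t$-flip; and the equality analyses above identify the graphs attaining the endpoints $f(G)=a_t$ and $f(G)=b_t$.

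The one remaining case — $l(G)=t$ with $t\ge n-1-t_{\max}$ (or $t\ge n$, where the previous monotonicity argument is vacuous) but $f(G)<a_{t_{\max}+1}$ — is where the real work lies, and I expect it to be the main obstacle. Here I would argue with the reduced graph $H$, splitting on the number $k$ of its non-isolated vertices via the decomposition $f(H)=t(n-k)+f(H[V_1])$, where $V_1$ is the set of non-isolated vertices. If $k$ is bounded away from $n$ by a constant times $\sqrt n$, then $t(n-k)$ already dominates $a_{t_{\max}+1}\sim n^{3/2}$ because $t\ge n-o(n)$; and very large $t$ (say $t\ge a_{t_{\max}+1}$) is handled by the crude bound $f(H)\ge t\bigl(n-2\lfloor(n-1)/2\rfloor\bigr)\ge t$. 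The delicate subcase has $k$ within $O(\sqrt n)$ of $n$ and $t$ in the window $\bigl(n,\,a_{t_{\max}+1}\bigr)$; there one must improve the bound $t_2(H)+t_3(H)\le\binom t2$ and show that the cherries $\sum_v\binom{d_v}{2}$ of $H$ cannot concentrate — a graph such as $K_{4,\,n/4}$ would violate $f(H)\ge a_{t_{\max}+1}$, but it is not reduced — by feeding the cut condition $e_H(C,V\setminus C)\le\tfrac12\lvert C\rvert\,\lvert V\setminus C\rvert$ into moderate-sized sets $C$, for instance common neighborhoods of pairs of vertices, to force $t_2(H)+t_3(H)$ small enough that $f(H)=t(n-2)-2\bigl(t_2(H)+t_3(H)\bigr)$ still exceeds $a_{t_{\max}+1}$. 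This case analysis of the degree sequence of a reduced graph against the cut condition is, I expect, the technical heart of the proof.
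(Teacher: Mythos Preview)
Your setup is correct and coincides with the paper's: the identity $f(H)=e(H)(n-2)-2\bigl(t_2(H)+t_3(H)\bigr)$ giving $a_{e(H)}\le f(H)\le b_{e(H)}$, flip-invariance, and passage to a reduced graph $H$ with $e(H)=l(G)$ (the paper writes $t_G$ for your $l(G)$). Your deduction of part (ii) from part (i) via uniqueness of the interval is also exactly what the paper does.

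The gap you yourself flag is real and is not closed. From $a_{l(G)}\le f(G)<a_{t_{\max}+1}$ together with unimodality of $s\mapsto a_s$ you obtain only $l(G)\le t_{\max}$ \emph{or} $l(G)\ge n-1-t_{\max}$, and when $l(G)\ge n-1$ the bound $a_{l(G)}\le 0$ is vacuous. Your plan for the large-$l(G)$ regime---splitting on the number $k$ of non-isolated vertices of $H$ and using the half-cut condition to control cherry counts---is a plausible heuristic, but the ``delicate subcase'' you isolate ($k$ within $O(\sqrt n)$ of $n$, $t$ in the window $(n,a_{t_{\max}+1})$) is not carried out, and the inequalities you gesture at are asymptotic whereas the statement is exact for every $n\ge 3$.

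The paper bypasses this regime entirely by strengthening the claim and inducting on $n$: it shows that for \emph{every} $t\ge 0$, $f(G)<a_{t+1}$ forces $t_G\le t$. For the step, flip $G$ so that some vertex $v$ is isolated and set $G'=G-v$, so $f(G)=f(G')+e(G')$. If $e(G')\le t$ then $t_G\le e(G)=e(G')\le t$ and we are done; otherwise $f(G')<a_{t+1}^{(n)}-(t+1)=a_{t+1}^{(n-1)}$, and induction gives a bipartition of $G'$ with at most $s\le t$ odd pairs. If the smaller side has size $x\le t-s$, placing $v$ on the larger side yields at most $x+s\le t$ odd pairs in $G$; if $x\ge t-s+1$, a two-line computation gives $f(G)\ge a_s+a_{t-s+1}-2s\ge a_{t+1}$, a contradiction. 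This argument is short, exact, and never sees the large-$t_G$ case; your direct attack through the reduced graph may well be completable, but as written it is not, and it would in any event be substantially more work than the paper's induction.
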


Here, to \emph{flip} a pair of vertices $uv$ means to change its edge/nonedge status, i.e. if $uv$ is an edge, make it a nonedge, and vice versa. We remark that Theorem~\ref{main1} part $(i)$ is equivalent to the statement: $f(G)\not\in(b_t,a_{t+1})$ for all $t\geq 0$. Note also that the intervals have lengths
	$$a_{t+1}-b_t=(n-2)-t(t+1)\text{ and }b_t-a_t=t(t-1).$$
Theorem~\ref{main1} is complemented by the following result. Observe that $a_{t_{max}+1}=(t_{max}+1)(n-t_{max}-2)\sim n^{3/2}$ and so Theorem~\ref{main1} deals with the case $f(G)\lesssim n^{3/2}$. Since $F_n$ is symmetric, we automatically have a corresponding result for $f(G)\gtrsim \binom{n}{3}-n^{3/2}$. On the other hand, we prove that every number, up to parity condition, in the large central part of $[0,\binom{n}{3}]$ is realisable as $f(G)$ for some $G$. Note that if $n$ is even, then $f(G)$ must also be even, since adding an edge to a graph changes the `frustration status' of exactly $n-2$ triples.

\begin{thm}
\label{main2}
\begin{enumerate}[(i)]
\item If $n$ is even and sufficiently large then $F_n$ contains every even integer between $n^{3/2}+O(n^{5/4})$ and $\binom{n}{3}-(n^{3/2}+O(n^{5/4}))$, and this is best possible up to the second order term.
\item If $n$ is odd and sufficiently large then $F_n$ contains every integer between $\sqrt{2}n^{3/2}+O(n^{5/4})$ and $\binom{n}{3}-(\sqrt{2}n^{3/2}+O(n^{5/4}))$, and this is best possible up to the second order term.
\end{enumerate}
\end{thm}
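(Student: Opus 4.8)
The plan is to split the task into two halves: an \emph{achievability} part, showing that every integer (of the right parity) in the central interval is of the form $f(G)$, and an \emph{optimality} part, showing that the endpoints $n^{3/2}+O(n^{5/4})$ (even case) and $\sqrt2\,n^{3/2}+O(n^{5/4})$ (odd case) cannot be pushed down. For achievability, the natural engine is a continuity/connectivity argument: I would look for a family of graphs that interpolates between a graph with $f$-value near $n^{3/2}$ and its complement, in steps that change $f(G)$ by a controlled amount. Adding or deleting a single edge changes $f$ by $n-2-2d$ where $d$ is the number of common neighbours of the endpoints, so by choosing which edge to flip one has a lot of freedom; the idea is to set up a sequence of single-edge moves $G=G_0,G_1,\dots,G_N=\overline G$ such that consecutive values $f(G_i)$ differ by at most, say, $2$ (or exactly $n-2$ in a way that tiles a residue class), so that the set $\{f(G_i)\}$ covers a full arithmetic progression with common difference $2$ (odd $n$: difference $1$) across the whole central range. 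Concretely I would start from a near-balanced complete bipartite graph (value $\Theta(n^{3/2})$ by the computation of $a_{t_{\max}}$, $b_{t_{\max}}$) and first walk $f$ up to the middle by repeatedly adding edges inside one part in a carefully ordered fashion, controlling the common-neighbour counts so that increments stay small; symmetry of $F_n$ about $\frac12\binom n3$ then gives the upper half for free.

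For the even case the target lower endpoint is $n^{3/2}+O(n^{5/4})$, which by Theorem~\ref{main1} is essentially $a_{t_{\max}+1}$ up to lower-order terms, so the achievability argument must dovetail with the exact description there: the forbidden gaps $(b_t,a_{t+1})$ below $a_{t_{\max}+1}$ are exactly what prevents small values from appearing, and for $t\le t_{\max}$ the characterisation in Theorem~\ref{main1}(ii) already tells us precisely which values in $[0,n^{3/2}]$ occur (those in $\bigcup_t[a_t,b_t]$). So the "best possible up to second order" claim for $n$ even reduces to checking that $a_{t_{\max}+1}=(t_{\max}+1)(n-t_{\max}-2)=n^{3/2}+O(n^{5/4})$ using $t_{\max}\sim\sqrt n$, which is a routine expansion, plus verifying that just above $a_{t_{\max}+1}$ the values do fill in with step $2$, which is the bottom end of the achievability walk.

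The odd case is genuinely different because the optimal lower endpoint jumps to $\sqrt2\,n^{3/2}$, so the extremal configuration is \emph{not} simply "complete bipartite with a few flips." Here I expect the main obstacle. One has to identify, for $n$ odd, the graph minimising $f(G)$ subject to $f(G)$ being large enough to begin a gap-free run — or rather, to show that no graph has $f$-value strictly between $n^{3/2}$-ish and $\sqrt2\,n^{3/2}$-ish other than the sparse ones already classified, i.e. that there is a second, larger regime of forbidden values when $n$ is odd. I would attack the lower bound $f(G)\gtrsim\sqrt2\,n^{3/2}$ for the relevant graphs via a convexity / quadratic-form estimate: writing $f(G)$ in terms of the degree sequence and the number of edges (using $t_3=\sum\binom{d_i}{2}/\text{(something)}$-type identities together with $t_1+t_3=f(G)$), one gets a lower bound that is a quadratic in the edge count $e$, minimised near $e=n^2/4$, and the odd-even discrepancy comes from the fact that a complete bipartite graph on an odd number of vertices is slightly unbalanced, forcing $\lfloor n/2\rfloor\lceil n/2\rceil$ rather than $n^2/4$ edges and hence a strictly positive contribution of order $n^{3/2}$. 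Making the constant come out to exactly $\sqrt2$ will require the sharp form of this estimate — this is the delicate point. Once the lower bound is in place, achievability in the odd central range proceeds exactly as above (single-edge walk with step $1$, now no parity restriction), and symmetry finishes the upper half. The last ingredient, the explicit $O(n^{5/4})$ error terms, should fall out of keeping track of lower-order terms in $t_{\max}\sim\sqrt n$ and in the quadratic estimate, and of the length of the initial "ramp-up" segment of the walk before the increments become uniformly small.
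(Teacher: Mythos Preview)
Your achievability sketch is too vague to count as a proof, but more importantly your diagnosis of the odd case is wrong, and the approach you outline there would not work.

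\textbf{Odd-case optimality.} You attribute the jump from $n^{3/2}$ to $\sqrt{2}\,n^{3/2}$ to the imbalance of $K_{\lfloor n/2\rfloor,\lceil n/2\rceil}$ and propose a convexity estimate in the edge count. But every complete bipartite graph, balanced or not, has $f=0$; the $\Theta(1)$ imbalance in the edge count cannot produce a $\Theta(n^{3/2})$ effect. The actual mechanism is a \emph{parity} obstruction: for $n$ odd one has $f(G)\equiv e(G)\pmod 2$, and by the structural theorem any $G$ with $f(G)\in(b_{t-1},a_{t+1})$ satisfies $t_G=t$, hence $f(G)\equiv t\pmod 2$. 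Thus at most one of two consecutive integers in $(b_{t-1},a_{t+1})$ can lie in $F_n$. Taking $s$ maximal with $b_{s-1}+2<a_{s+1}$ gives $s\sim\sqrt{2n}$ (from $s^2\sim 2n$, not $s^2\sim n$), so one of $b_{s-1}+1,\,b_{s-1}+2$ is excluded and $b_{s-1}\sim\sqrt{2}\,n^{3/2}$. There is no ``second regime of forbidden values''; the same intervals $[a_t,b_t]$ govern both parities, but each interval supplies only one residue class mod $2$.

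\textbf{Achievability.} Your formula for the change under an edge flip is not quite right (the change is $n-2-2k$ where $k$ counts third vertices adjacent to exactly one endpoint, not common neighbours), but the real issue is that ``choose edges in a carefully ordered fashion so increments stay small'' is not a construction. The paper does not attempt to keep single-edge increments at $\pm 2$; instead it uses a \emph{multi-scale} gadget: reserve disjoint matchings of sizes roughly $\sqrt{n}$, $n^{1/4}$, $n^{1/4}$ (doubled constants for $n$ odd), add edges freely inside the remaining block to get a sequence with gaps $\le n-2$, then successively convert matching edges to star edges (each such swap decreases $f$ by exactly $2(j-1)$) to refine gaps to $O(\sqrt{n})$, then $O(n^{1/4})$, and finally convert matching edges to path edges (each swap decreases $f$ by exactly $2$) to fill in completely. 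For $n$ odd the coarse sequence alternates parity, so one works within each residue class with gaps $\le 2(n-2)$, which is why the matching sizes (and hence the endpoint) pick up the factor $\sqrt{2}$. Your single-edge-walk idea does not supply these exact decrements and would need an entirely different mechanism to close the gaps.
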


Let us change the direction and turn to the following related natural question. Given the number of vertices and the number of edges, which graphs maximise/minimise the number of frustrated triangles? The method we develop to prove Theorem~\ref{main1} allows us to partially answer this question. Before we state the result, we shall define some necessary notations. For $0\leq x\leq \frac{n}{2}$, let $c^{(n)}_x=x(n-x)$ be the number of edges in the complete bipartite graph $K_{x,n-x}$. For an integer $e$, let $g(e)=\min\{|e-c_x|:0\leq x\leq \frac{n}{2}\}$ be the distance from $e$ to the closest element of the sequence $(c_x)$. We are able to determine the minimal graphs when the number of edges is at most $\frac{n^2}{4}$. 

\begin{thm}
\label{main3}
If $G$ is a graph with $n$ vertices and $e$ edges then $f(G)\geq a_{g(e)}$. Moreover, this bound can be achieved when $e\leq \lfloor\frac{n^2}{4}\rfloor+\lfloor\frac{n-1}{2}\rfloor-1${\normalfont:} in this case the extremal graphs are obtained from a complete bipartite graph by deleting or adding $g(e)$ edges forming a star.
\end{thm}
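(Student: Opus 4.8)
The plan is to reduce Theorem~\ref{main3} to the double-counting identity
\[
  f(G)=(n-2)e-2\bigl(t_2(G)+t_3(G)\bigr),
\]
which holds for every $n$-vertex graph $G$ with $e$ edges: it follows from $t_1+2t_2+3t_3=(n-2)e$ (each edge lies in $n-2$ triples) together with $f=t_1+t_3$. So $f(G)\ge a_{g(e)}$ is equivalent to the extremal statement that $G$ spans at most $\tfrac12\bigl((n-2)e-a_{g(e)}\bigr)$ triples with two or three edges; and since $t_2+t_3=\sum_v\binom{d_v}{2}-2t_3(G)$, this forces one to weigh the degree power sum $\sum_v d_v^2$ against the number of triangles. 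Both terms are genuinely needed, since on $c_2$ edges one can already make $\sum_v d_v^2$ larger than $nc_2$, so the pure degree bound is false and triangles must be paid for.

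For the lower bound I would push the argument behind Theorem~\ref{main1} further. Fix a partition $V=A\cup B$ with $|A|\ge|B|$ minimising the edit distance $d(G,K_{A,B})=\alpha+\beta$, where $\alpha$ counts edges of $G$ inside $A$ or inside $B$ and $\beta$ counts $A$--$B$ pairs that are non-edges of $G$; then $e=|A||B|+\alpha-\beta$, hence $g(e)\le|\alpha-\beta|\le\alpha+\beta=\delta(G)$. When $\delta(G)\le\tfrac{n-1}{2}$, the optimality of $(A,B)$ (no single vertex can switch sides and decrease the distance) gives for each vertex a comparison between its numbers of neighbours in $A$ and in $B$; substituting these into the count of $t_2+t_3$ split by how a triple meets $A$ and $B$, and using convexity of $\binom{\cdot}{2}$ (which drives the $\alpha$ extra edges and the $\beta$ absent crossing pairs to pile up at one vertex, i.e. into a star), should give $f(G)\ge a_{\delta(G)}\ge a_{g(e)}$. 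Note that Theorem~\ref{main1} supplies this only for $\delta(G)\le t_{\max}\sim\sqrt n$, so extending it to all $\delta(G)\le\tfrac{n-1}{2}$ is the new work. When $\delta(G)>\tfrac{n-1}{2}$ a cruder estimate should show $f(G)\ge\max_t a_t$, which already dominates $a_{g(e)}$ for every $e$.

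For the "moreover" part, choose $x$ with $|e-c_x|=g(e)$, put $H=K_{x,n-x}$, and let $G_0$ be $H$ with a set of $g(e)$ edges forming a star added (if $e>c_x$) or deleted (if $e<c_x$). Splitting the triples of $G_0$ according to how they meet the two parts, the centre of the star and its leaves, a short computation gives $f(G_0)=g(e)\,(n-1-g(e))=a_{g(e)}$, so the bound is attained; and the hypothesis $e\le\lfloor n^2/4\rfloor+\lfloor (n-1)/2\rfloor-1$ is precisely what lets such a star be placed (leaves in one part of $H$, resp. all edges at one vertex of $H$). For uniqueness of the extremal graphs one reads off the equality case of the lower bound: equality forces $\delta(G)=g(e)$, i.e. $\alpha\beta=0$, so $G$ is genuinely a $g(e)$-flip of a complete bipartite graph, and then the convexity step taken at equality forces the flipped pairs to form a star, exactly as in the equality analysis underlying Theorem~\ref{main1}(ii).

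I expect the lower bound to be the main obstacle, and within it the extremal estimate for the number of triples meeting $G$ in at least two edges once a near-optimal bipartition is fixed: one must rule out that any redistribution of degrees away from the star-flip helps, which means simultaneously controlling triangles (which shrink $t_2+t_3$ but are costly in edges) and the interplay of the $\alpha$ extra edges with the $\beta$ missing crossing pairs. Carrying this through for every $\delta(G)$ up to $\tfrac{n-1}{2}$, rather than just up to $t_{\max}$, together with the separate treatment of graphs far from every complete bipartite graph, is where the real work lies.
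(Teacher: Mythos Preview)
Your outline has the right skeleton---define the edit distance $\delta(G)=t_G$ to the nearest complete bipartite graph, observe $g(e)\le t_G$, and then deduce $f(G)\ge a_{g(e)}$---but you have misjudged where the work lies. You write that ``Theorem~\ref{main1} supplies this only for $\delta(G)\le t_{\max}\sim\sqrt n$, so extending it to all $\delta(G)\le\tfrac{n-1}{2}$ is the new work.'' In fact there is no new work: the paper proves Theorem~\ref{main1} via the stronger Theorem~\ref{main1-2}, whose part~(i) states that $f(G)<a_{t+1}$ implies $t_G\le t$ for \emph{every} $t\ge 0$, not just $t\le t_{\max}$ (the inductive argument never uses $t\le t_{\max}$). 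Taking the contrapositive with $t=g(e)-1$ and using $t_G\ge g(e)$ gives $f(G)\ge a_{g(e)}$ in one line, with no case split on the size of $t_G$. So the lower bound in Theorem~\ref{main3} is an immediate corollary, not a new extremal problem.

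Your proposed alternative---fix an optimal bipartition, exploit its local optimality vertex by vertex, and push the $\alpha$ extra edges and $\beta$ missing crossing edges into a star by convexity of $\binom{\cdot}{2}$---is a genuinely different route, and it is plausible that it can be made to work. But it is only sketched (``should give'', ``a cruder estimate should show''), and the interaction between triangles and the degree-power sum that you flag is exactly what makes a direct attack awkward; the paper sidesteps all of this by the flipping/induction machinery already built for Theorem~\ref{main1-2}. For the construction and uniqueness you are essentially doing what the paper does, though the paper avoids your ``short computation'' of $f(G_0)$ by observing via Corollary~\ref{flip3} that $G_0\sim(g(e)\text{-star})$, whence $f(G_0)=a_{g(e)}$ for free; similarly, uniqueness comes from $t_G=g(e)$ (forced by Theorem~\ref{main1-2}(i) once $g(e)\le\lfloor(n-1)/2\rfloor-1$, with a separate small check at the boundary) together with the fact that the star is the unique $f$-minimiser on $g(e)\le n-1$ edges.
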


%suppose that $G$ is an extremal graph. Then
%\begin{itemize}
%\item if $e=c_x-g(e)$ for some $x$ then $G$ can be obtained from $K_{x,n-x}$ by deleting $g(e)$ edges forming a star; and
%\item if $e=c_x+g(e)$ for some $x$ then $G$ can be obtained from $K_{x,n-x}$ by adding $g(e)$ edges forming a star.
%\end{itemize}

The rest of this paper is organised as follows. In Section~\ref{prelim}, we present some preliminary results for the readers to get familiar with frustrated triangles and to motivate the definitions and ideas used to prove the main results. Sections~\ref{proof1} and~\ref{proof2} are devoted to the proofs of Theorems~\ref{main1} and~\ref{main2} respectively. In Section~\ref{proof3}, we describe an application of Theorems~\ref{main1}. We conclude the paper in Section~\ref{conclude} with some open problems.

%%%%%%%%%%%%%%%%%%%%%%%%%%%%%%%%%%%%%%%%%%%%%%%%%%%%%%%%%%%%%%%%%%%%%%%%%%%%%%%%%%%%%%%%%%%%%%%%%%%

\section{Preliminaries}
\label{prelim}

We shall start with a coffee time problem which is a special case of Theorem~\ref{main1}. By considering the empty and the complete graphs, we see that $0$ and $\binom{n}{3}$ are always in $F_n$. A natural question is that, what is the first nonzero element of $F_n$? If we consider a graph with only one edge $uv$, the frustrated triangles are those triples containing both $u,v$; therefore there are $n-2$ frustrated triangles. It turns out that $n-2$ is the answer. Before we give the proof, let us introduce the \emph{flipping operation} which is an important idea for dealing with frustrated triangles.

Recall that to flip a pair of vertices is to change its edge/nonedge status. For a vertex $v$ of a graph $G$, let $G_v$ denote the graph obtained from $G$ by flipping the pairs $uv$ for all $u\in G\backslash\{v\}$ (see Figure~\ref{flipv}), i.e. $V(G_v)=V(G)\text{ and }E(G_v)=E(G)\cup \{uv:uv\not\in E(G)\}- \{uv:uv\in E(G)\}$.

\begin{figure}[h]
\begin{center}
\begin{tikzpicture}[auto, xscale = 1.2, yscale = 1.2]
\tikzstyle{vertex}=[inner sep=0.5mm, thick, circle, draw=black!100, fill=black!100]

\node (v1) at (0,1) [vertex] {};
\node (vv) at (-0.1,1.2) {\large $v$};
\node (v2) at (0.91,-0.42) [vertex] {};
\node (v3) at (0.54,-0.84) [vertex] {};
\node (v4) at (0,-1) [vertex] {};
\node (v5) at (-0.54,-0.84) [vertex] {};
\node (v6) at (-0.91,-0.42) [vertex] {};
\draw (v1) -- (v2) [dotted];
\draw (v1) -- (v3);
\draw (v1) -- (v4) [dotted];
\draw (v1) -- (v5);
\draw (v1) -- (v6);
\node (vvv) at (-1,0.4) {\large $G$};

\draw (1.5,0.1) -- (2.5,0.1) [-angle 90];

\node (u1) at (4,1) [vertex] {};
\node (uu) at (3.9,1.2) {\large $v$};
\node (u2) at (4.91,-0.42) [vertex] {};
\node (u3) at (4.54,-0.84) [vertex] {};
\node (u4) at (4,-1) [vertex] {};
\node (u5) at (4-0.54,-0.84) [vertex] {};
\node (u6) at (4-0.91,-0.42) [vertex] {};
\draw (u1) -- (u2);
\draw (u1) -- (u3) [dotted];
\draw (u1) -- (u4);
\draw (u1) -- (u5) [dotted];
\draw (u1) -- (u6) [dotted];
\node (vvv) at (5,0.4) {\large $G_v$};

\end{tikzpicture}
\end{center}
\caption{The flipping operation on vertex $v$.}
\label{flipv}
\end{figure}
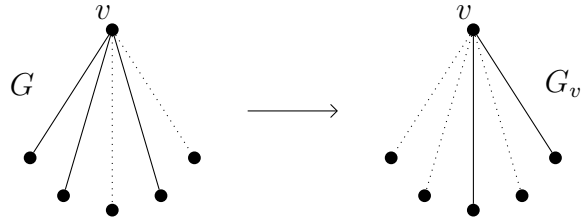

By \emph{flipping $v$}, we mean an operation of changing $G$ to $G_v$. The readers should be warned to note the difference between flipping a pair of vertices and flipping a vertex. We then have the following easy but useful lemma.

\begin{lem}
\label{flip}
For any graph $G$, $f(G)$ is preserved under the flipping operation, i.e. $f(G_v)=f(G)$ for any vertex $v\in G$.
\end{lem}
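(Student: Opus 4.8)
The plan is to analyze directly how the flipping operation affects the frustration status of each triple of vertices. Fix a vertex $v$ and an arbitrary triple $T=\{x,y,z\}$; I would argue that $T$ induces an odd number of edges in $G$ if and only if it induces an odd number of edges in $G_v$. There are two cases. If $v\notin T$, then none of the three pairs inside $T$ is touched by the flipping operation, so $T$ induces exactly the same edges in $G$ and in $G_v$, and in particular its frustration status is unchanged. If $v\in T$, say $T=\{v,x,y\}$, then the pair $xy$ is untouched, while both pairs $vx$ and $vy$ get flipped. Flipping a single pair changes the number of induced edges in $T$ by $\pm 1$, hence changes the parity; flipping two of the three pairs changes the parity twice, i.e. not at all. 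So again the frustration status of $T$ is preserved.

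Since every triple preserves its frustration status under flipping $v$, summing over all $\binom{n}{3}$ triples gives $f(G_v)=f(G)$, which is exactly the claim. I would present the $v\in T$ case as the only one requiring a word of explanation, perhaps with a one-line remark that the key point is the fact that $2$ is even: flipping a vertex toggles an even number (namely two) of the pairs inside any triple through $v$.

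The only mild subtlety worth making explicit is that one should check $G_v$ is indeed a well-defined graph on the same vertex set, which is immediate from the displayed definition $V(G_v)=V(G)$, $E(G_v)=E(G)\cup\{uv:uv\notin E(G)\}-\{uv:uv\in E(G)\}$; no self-loops or multi-edges are created since we only ever change the status of pairs $uv$ with $u\neq v$. I do not anticipate any real obstacle here — the lemma is genuinely a parity bookkeeping statement, and the entire proof is the two-case argument above. If desired, one can reinforce it with the observation that this is why the flipping operation will later let us move freely between a complete bipartite graph and its ``flipped'' relatives without changing $f$, which motivates the hypotheses of Theorem~\ref{main1}.
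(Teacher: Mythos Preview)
Your argument is correct and is essentially identical to the paper's own proof: both show the stronger statement that each individual triple preserves its frustration status, by splitting into the cases $v\notin\{x,y,z\}$ (nothing changes) and $v\in\{x,y,z\}$ (exactly two of the three pairs are flipped, so parity is preserved). The only difference is length; the extra remarks about well-definedness and motivation are fine but not needed.
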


\begin{proof}
More is true: $\{x,y,z\}$ is frustrated in $G_v$ iff it is in $G$. This is obvious if $v\not\in\{x,y,z\}$. If $v\in\{x,y,z\}$ then exactly two pairs of $\{x,y,z\}$ were flipped; therefore the parity of the number of edges induced by $\{x,y,z\}$ stays the same.
\end{proof}

Before we show that $n-2$ is the first nonzero element of $F_n$, it is useful to answer the following question. What are the graphs with no frustrated triangles?

\begin{prop}
\label{zero}
For any graph $G$, $f(G)=0$ iff $G$ is a complete bipartite graph.
\end{prop}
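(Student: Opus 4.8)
The plan is to prove both directions, the easy one first. If $G$ is complete bipartite, say with parts $A$ and $B$, then any triple of vertices either lies entirely in one part (inducing $0$ edges) or has two vertices in one part and one in the other (inducing exactly $2$ edges); in both cases the number of induced edges is even, so no triple is frustrated and $f(G)=0$. For the converse, suppose $f(G)=0$. The key tool will be the flipping operation from Lemma~\ref{flip}, together with the elementary observation that $f$ counts triples with an \emph{odd} number of induced edges, i.e. $f(G)=0$ exactly when every triple of $V(G)$ induces an even number of edges (zero or two).

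I would first record this ``every triangle induces an even number of edges'' condition and extract its local meaning: for any three vertices $x,y,z$, it is impossible that exactly one of the three pairs is an edge, and impossible that all three are edges. In particular $G$ is triangle-free, and moreover whenever $xy\in E(G)$ and $yz\in E(G)$ we must also have $xz\in E(G)$ --- but that would create a triangle, contradiction --- so actually $G$ contains \emph{no path on three vertices}, which is far too strong; the correct reading is that the ``exactly one edge'' case is the one that bites: if $xy\in E(G)$ then for every third vertex $z$, the pair $\{xz,yz\}$ must contribute an even number of edges to the triple, i.e. $xz\in E(G) \iff yz\in E(G)$. In other words, adjacent vertices have the same neighbourhood outside the pair: $N(x)\setminus\{y\} = N(y)\setminus\{x\}$. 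This says ``being adjacent'' behaves like an equivalence-type relation on the structure, which is exactly the signature of a complete multipartite complement.

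The cleanest way to finish is then to consider the complement relation: define $u\sim v$ if $u=v$ or $uv\notin E(G)$, i.e. $u$ and $v$ are non-adjacent. I claim $\sim$ is an equivalence relation. Reflexivity and symmetry are clear. For transitivity, suppose $uv\notin E$ and $vw\notin E$ but $uw\in E$; applying the neighbourhood condition above to the edge $uw$ and the vertex $v$ gives $uv\in E \iff vw\in E$, and since neither holds we get a contradiction only if... here I need to recheck the direction, but the point is that the ``exactly one edge on $\{u,v,w\}$'' configuration is forbidden, and $uw\in E$, $uv\notin E$, $vw\notin E$ is precisely one edge on the triple --- contradiction. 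Hence $uw\notin E$, so $\sim$ is transitive. Thus the non-adjacency classes partition $V(G)$ into sets $V_1,\dots,V_k$, every pair inside a class is a non-edge, and (by triangle-freeness, which rules out $k\geq 3$ with an edge structure that would force a triangle — specifically, picking one vertex from each of three distinct classes yields a triangle) we conclude $k\leq 2$ and every pair across the (at most two) classes is an edge. Therefore $G$ is complete bipartite (with one part possibly empty).

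The main obstacle is getting the logical bookkeeping of the ``exactly one edge forbidden'' condition exactly right and using it to establish transitivity of non-adjacency cleanly; once the partition into non-adjacency classes is in hand, triangle-freeness immediately forces at most two classes, and the complete-bipartite conclusion follows. An alternative, perhaps slicker, route is induction on $n$ via the flipping operation: flip a vertex $v$ of maximum degree to make it isolated (this requires $v$ to be adjacent to ``everything or nothing'' relative to each class, which the neighbourhood condition guarantees), apply Lemma~\ref{flip} so $f$ is unchanged and still $0$, invoke the inductive hypothesis on $G_v - v$, and check that reattaching $v$ keeps the graph complete bipartite --- but the direct argument above is self-contained and I would present that.
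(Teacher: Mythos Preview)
Your final argument --- showing that non-adjacency is an equivalence relation (transitivity coming from the forbidden one-edge triple), hence $G$ is complete multipartite, and then invoking triangle-freeness to force at most two parts --- is correct and self-contained. The paper takes a shorter, more direct route: fix any vertex $v$, observe that $\Gamma(v)$ is independent (else a triangle through $v$), that $V\setminus\Gamma(v)$ is independent (else a one-edge triple with $v$), and that every cross pair is an edge (else again a one-edge triple with $v$); this exhibits the bipartition immediately as $\Gamma(v)$ versus its complement. Your route has the pleasant by-product of isolating the intermediate ``complete multipartite'' structure (which is exactly what forbidding only the one-edge triples yields), whereas the paper's argument gets to the bipartition in three lines without that detour.

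One correction to your exploratory middle paragraph: when $xy\in E(G)$, the parity condition on $\{x,y,z\}$ forces \emph{exactly one} of $xz,yz$ to be an edge, i.e.\ $z\in N(x)\iff z\notin N(y)$; adjacent vertices have \emph{complementary}, not equal, neighbourhoods outside the pair. You never actually use that claim in the equivalence-relation argument you settle on, so the proof stands, but the statement as written is backwards and the ``this says adjacency behaves like an equivalence-type relation'' intuition drawn from it is off --- it is \emph{non}-adjacency that is the equivalence relation, exactly as you then proceed to show.
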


\begin{proof}
It is clear that a complete bipartite graph contains no frustrated triangles. Conversely, given a vertex $v$ of a graph $G$ with $f(G)=0$. Then there is no edge in $G[\Gamma(v)]$ where $\Gamma(v)$ is the neighborhood of $v$; otherwise it would form a frustrated triangle with $v$. Similarly, there is no edge in the nonneighborhood of $v$. Also, if $vx$ is an edge and $vy$ is a nonedge, then $xy$ has to be an edge. Therefore, $G$ is a complete bipartite graph with parts $\Gamma(v)$ and $V\backslash\Gamma(v)$.
\end{proof}

We are now ready to show that there is no graph on $n$ vertices with number of frustrated triangles strictly between $0$ and $n-2$.

\begin{prop}
\label{coffee}
For all $n\in\N$, $F_n\cap(0,n-2)=\emptyset$.
\end{prop}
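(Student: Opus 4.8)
The statement to prove is $F_n \cap (0, n-2) = \emptyset$: any graph $G$ on $n$ vertices with at least one frustrated triangle has at least $n-2$ of them. I would prove this by an extremal/minimality argument combined with the flipping operation from Lemma~\ref{flip}. Suppose $G$ has $0 < f(G) < n-2$; I want a contradiction. Since $f(G) > 0$, $G$ is not complete bipartite by Proposition~\ref{zero}, so some triple is frustrated.

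The key idea is that flipping a vertex preserves $f$, so I may apply flips freely to put $G$ into a convenient form. First I would pick a vertex $v$ and note that flipping $v$ makes $v$ adjacent to exactly the vertices it was non-adjacent to before. Using this, I would try to make $G$ "mostly bipartite". More concretely: let $v$ be any vertex and let $A = \Gamma(v)$, $B = V \setminus (\{v\} \cup \Gamma(v))$. Every edge inside $A$ and every edge inside $B$, and every non-edge between $A$ and $B$, creates at least one frustrated triangle together with $v$. If there are no such "defect" pairs, $G$ is complete bipartite and $f(G) = 0$. So there is at least one defect pair, say $xy$. Now I would count frustrated triangles: the pair $xy$ together with each of the other $n-2$ vertices forms a triple, and I want to argue that many of these are frustrated. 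This is exactly the same count as for a single edge in the empty graph — a single "wrong" pair relative to the bipartition $(A \cup \{v\}$ split appropriately$)$ — and it should force roughly $n-2$ frustrated triangles, unless there are compensating defects. The hard part will be handling the interaction between multiple defect pairs, which could in principle cancel: a triple $\{x,y,z\}$ might be "un-frustrated" because it contains two defect pairs.

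To control this, I would argue by choosing the bipartition optimally or by choosing $v$ to minimize the number of defect pairs, and then show that with a minimal number of defects the contributions cannot cancel enough. Alternatively — and this is probably cleaner — I would induct on $n$ or reduce to a small case: if every vertex $v$ has at least two non-neighbors among the defect structure, delete a vertex and apply induction; the base cases $n = 3, 4$ are checked by hand (for $n=3$, $(0,1) = \emptyset$ trivially; for $n = 4$, $(0,2)$ contains only $1$, and $f(G)$ is even when... actually $n=4$ even forces $f$ even, but one must still rule out $f = 1$ directly). The cleanest route is likely: reduce to the case where $G$ has exactly one defect pair $xy$ relative to some bipartition after flipping, and in that case directly show $f(G) \geq n-2$ by observing that for each $z \neq x, y$, the triple $\{x,y,z\}$ is frustrated (since $xz, yz$ are "correct" relative to the bipartition, so exactly one or three edges appear depending on which side $z$ is on — in all cases an odd count). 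I would then verify that reaching this "one defect pair" case is always possible by a flipping/minimality argument: take $G$ with $0 < f(G) < n-2$ minimizing the total number of defect pairs over all vertices $v$ and all choices, and derive that this minimum must be $1$, contradicting $f(G) \geq n-2$.

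The main obstacle, as noted, is the possible cancellation among multiple defect pairs when several share a vertex — ruling this out requires either a careful local counting argument (each vertex incident to a defect pair already "sees" many frustrated triangles, and double-counting these over all vertices gives $\sum_v (\text{frustrated triangles through } v) = 3 f(G)$, forcing $f(G)$ large) or the reduction to the single-defect case via flips. I expect the intended proof uses the identity $3f(G) = \sum_v f_v(G)$ where $f_v(G)$ counts frustrated triangles through $v$, combined with the observation that if $v$ is incident to a defect pair then $f_v(G) \geq 1$, and a sharper bound $f_v(G) \geq$ (something like the number of defect pairs at $v$) to push the sum up to $\geq 3(n-2)$; making that sharper per-vertex bound precise is the crux.
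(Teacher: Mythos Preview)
Your proposal circles the right ideas (flipping, defect pairs relative to a bipartition, the one-defect case giving exactly $n-2$) but does not close. Both routes you sketch have a genuine gap. For the ``reduce to one defect pair'' approach, you assert that minimising the number of defect pairs over all flip-equivalent graphs must yield $1$ when $0<f(G)<n-2$, but you give no mechanism for proving this; that claim is essentially equivalent to the proposition itself. For the double-counting route $3f(G)=\sum_v f_v(G)$, you would need a per-vertex lower bound strong enough to force $f(G)\ge n-2$, and as you yourself note, making that bound precise is ``the crux'' --- you have not supplied it, and in fact such a bound does not fall out easily, since a vertex can lie in very few frustrated triples even when $f(G)$ is moderately large.

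The paper's proof is a short induction on $n$ that you mention in passing but do not set up correctly. The key step you are missing is this: by flipping \emph{each neighbour} of a fixed vertex $v$ (not flipping $v$ itself), you may assume $v$ is isolated, and then the identity
\[
f(G)=f(G')+e(G')\qquad (G'=G-v)
\]
holds, since every triple through $v$ is frustrated iff the opposite pair is an edge. Now apply the induction hypothesis to $G'$: either $f(G')\ge n-3$, in which case $f(G)\ge (n-3)+e(G')\ge n-2$ unless $G'$ is empty; or $f(G')=0$, in which case $G'$ is complete bipartite by Proposition~\ref{zero}, so $e(G')=x(n-1-x)$ is either $0$ or at least $n-2$. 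Either way $f(G)\notin(0,n-2)$. This identity $f(G)=f(G')+e(G')$ is the missing engine; once you have it, the induction is immediate and none of the defect-counting or double-counting machinery is needed.
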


\begin{proof}
We apply induction on $n$. There is nothing to check for $n=3$. Let $G$ be a graph on $n$ vertices. Our aim is to show that $f(G)\not\in(0,n-2)$. Without loss of generality, we may assume that $G$ has an isolated vertex $v$. This is because we can flip each neighbor of $v$ to make it a nonneighbor while preserving the number of frustrated triangles in $G$ by Lemma~\ref{flip}. Let $G'=G-v$ be the graph obtained from $G$ by deleting $v$. Since $v$ is an isolated vertex, we have $$f(G)=f(G')+e(G').$$ By the induction hypothesis, $f(G')\not\in(0,n-3)$. We shall distinguish two cases.

\textbf{Case 1: $f(G')\geq n-3$}\\
	If $e(G')\geq 1$ then $f(G)=f(G')+e(G')\geq (n-3)+1\geq n-2$ as required. If $e(G')=0$ then $G$ is empty and so $f(G)=0$ as required.
	
\textbf{Case 2: $f(G')=0$}\\
	By Proposition~\ref{zero}, $G'$ is complete bipartite and so $e(G')=x(n-1-x)$ for some $0\leq x\leq n-1$. We are done if $G'$ is empty. If $G'$ is not empty, then $e(G')$ is minimised when $x=1$ or $n-2$, i.e. $f(G)=e(G')\geq n-2$ as required.
\end{proof}

We have just proved that $f(G)$ cannot lie in the gap between the intervals $[a_0,b_0]=\{0\}$ and $[a_1,b_1]=\{n-2\}$ which is the first case of Theorem~\ref{main1} part $(i)$. The equation $f(G)=f(G')+e(G')$ in the proof suggests that, in order to understand the possible number of frustrated triangles, we should understand the possible number of edges in a graph with a given number of frustrated triangles. In fact, we will have an analogue of Proposition~\ref{zero} for $f(G)\lesssim n^{3/2}$, i.e. we will not only know the possible number of edges, but we will also know the possible structure of the graph (see Theorem~\ref{main1} part $(ii)$).

Let us now consider the converse of Lemma~\ref{flip}. We write $G\sim H$ if $G$ can be obtained from a graph $H$ by a sequence of vertex flippings. Clearly, $\sim$ is an equivalence relation. Observe that the complete bipartite graphs on $n$ vertices form an equivalence class.
Indeed, let $G$ be a complete bipartite graph with parts $A,B$ and let $v\in A$. Then flipping $v$ is equivalent to moving $v$ across from $A$ to $B$, i.e. $G_v$ is the complete bipartite graph with parts $A\backslash\{v\}$ and $B\cup\{v\}$. Therefore, another way to state Proposition~\ref{zero} is: for any graph $G$ on $n$ vertices, $f(G)=f(E_n)$ iff $G\sim E_n$ where $E_n$ is the empty graph on $n$ vertices.

This shows that the converse of Lemma~\ref{flip} is true in the case $f(G)=0$. Does it hold in general? That is, given two graphs with the same number of frustrated triangles, can we always obtain one from the other by a sequence of vertex flippings? Unfortunately, this is false. As we can see from the proof of Lemma~\ref{flip}, if $G\sim H$ then not only do we have $f(G)=f(H)$ but there is also a bijection $\phi:V(G)\to V(H)$ such that $\{u,v,w\}$ is frustrated iff $\{\phi(u),\phi(v),\phi(w)\}$ is frustrated. It is easy to see that this is also sufficient.

\begin{prop}
\label{flip2}
Let $G$ and $H$ be graphs on $n$ vertices. The following statements are equivalent.
\begin{enumerate}[(i)]
\item $G\sim H$.
\item There is a bijection $\phi:V(G)\to V(H)$ such that $\{u,v,w\}$ is frustrated in $G$ iff\\
$\{\phi(u),\phi(v),\phi(w)\}$ is frustrated in $H$.
\end{enumerate}
\end{prop}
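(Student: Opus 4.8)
The implication $(i)\Rightarrow(ii)$ is immediate from Lemma~\ref{flip}. If $G\sim H$, then after using some bijection to identify the two vertex sets we may regard $H$ as obtained from $G$ by a sequence of vertex flippings; by Lemma~\ref{flip} each flip preserves, triple by triple, whether a triple is frustrated, so this identification is exactly the bijection $\phi$ required in $(ii)$. All the content is in $(ii)\Rightarrow(i)$, and the plan there is to reduce to Proposition~\ref{zero} via a symmetric-difference trick.

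So suppose $(ii)$ holds. Using $\phi$ to identify $V(G)$ with $V(H)$, we may treat $G$ and $H$ as two graphs on one common vertex set $V$ having exactly the same frustrated triples, and our goal is to turn $G$ into $H$ by vertex flippings. Consider the graph $D=G\mathbin{\triangle}H$ on $V$, whose edges are the pairs lying in precisely one of $G,H$. The key observation is a parity count: for any triple $T$ of vertices, if $e_G(T),e_H(T),e_D(T)$ denote the numbers of edges of $G,H,D$ inside $T$, then $e_D(T)\equiv e_G(T)+e_H(T)\pmod 2$ (check this on the three pairs of $T$ one at a time). Since $T$ is frustrated in $G$ iff it is frustrated in $H$, we have $e_G(T)\equiv e_H(T)\pmod 2$, so $e_D(T)$ is even for every $T$; that is, $f(D)=0$.

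By Proposition~\ref{zero}, $D$ is therefore a complete bipartite graph on $V$, say with parts $W$ and $V\setminus W$, where one part may be empty (in which case $D$ is edgeless and $G=H$ already, so there is nothing to do). It then remains to check that flipping, one at a time, all the vertices of $W$ carries $G$ to $H$: flipping a single vertex $v$ changes the edge set by taking symmetric difference with the star $\{uv:u\in V\setminus\{v\}\}$, so flipping every vertex of $W$ changes the edge set by symmetric difference with $\{xy:\text{exactly one of }x,y\text{ lies in }W\}=K_{W,V\setminus W}=D$, turning $G$ into $G\mathbin{\triangle}D=H$. Hence $G\sim H$, completing the proof. The argument is short because Proposition~\ref{zero} does the real work; the only things demanding care are the two bookkeeping facts just used — that $e_D(T)\equiv e_G(T)+e_H(T)\pmod 2$, and that flipping the vertex set $W$ is the same as adding the complete bipartite graph $K_{W,V\setminus W}$ modulo $2$ — together with the degenerate cases $W=\varnothing$ or $W=V$. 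I do not expect any genuine obstacle.
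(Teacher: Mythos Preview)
Your proof is correct, and it takes a genuinely different route from the paper's.

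The paper argues $(ii)\Rightarrow(i)$ directly: it fixes a vertex $v$, flips exactly those vertices $u$ for which the pair $uv$ disagrees between $G$ and $H$, and then observes that in the resulting graph $G'$ every pair through $v$ agrees with $H$; since each triple $\{u,v,w\}$ has the same frustration status in $G'$ and $H$ and two of its three pairs already agree, the third pair $uw$ is forced to agree as well, so $G'\cong H$. Your argument instead forms the symmetric-difference graph $D=G\mathbin{\triangle}H$, uses the parity identity $e_D(T)\equiv e_G(T)+e_H(T)\pmod 2$ to conclude $f(D)=0$, invokes Proposition~\ref{zero} to identify $D$ as $K_{W,V\setminus W}$, and then flips all of $W$.

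The two are secretly the same flip: the paper's set $U$ is the $D$-neighbourhood of $v$, which (once one knows $D$ is complete bipartite) is one of the two parts. What your approach buys is that Proposition~\ref{zero} does the structural work in one stroke, and the correctness of the flip set is transparent from $G\mathbin{\triangle}K_{W,V\setminus W}=H$; what the paper's approach buys is that it never needs to name $D$ or invoke Proposition~\ref{zero}, instead verifying agreement pair by pair via the frustration constraint. Both are short; yours is the more conceptual packaging.
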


\begin{proof}
The implication $(i)\Rightarrow(ii)$ follows from the proof of Lemma~\ref{flip}.

To prove $(ii)\Rightarrow(i)$, suppose $\phi$ is a bijection satisfying $(ii)$. We say that a pair of vertices $ab$ of $G$ and a pair of vertices $xy$ of $H$ \emph{agree} if both $ab$ and $xy$ are edges, or both $ab$ and $xy$ are nonedges. Let $v$ be a vertex of $G$ and let $U=\{u\in V(G)\backslash\{v\}:uv\text{ and }\phi(u)\phi(v)\text{ disagree}\}$. Let $G'$ be the graph obtained from $G$ by flipping each vertex of $U$. We claim that $G'$ and $H$ are isomorphic. Clearly, $uv$ in $G'$ and $\phi(u)\phi(v)$ in $H$ agree for all $u\in V(G')\backslash\{v\}$. Furthermore, by the proof of Lemma~\ref{flip}, $\phi$ still satisfies $(ii)$ after replacing $G$ with $G'$. Hence, for any $u,w\in V(G')\backslash\{v\}$, the graphs $G'[u,v,w]$ and $H[\phi(u),\phi(v),\phi(w)]$ have the same `frustration status'. Since two of the pairs agree, the third pairs $uw$ and $\phi(u)\phi(w)$ must also agree. Therefore, $G'$ and $H$ are isomorphic, and so $H$ can be obtained from a $G$ by a sequence of vertex flippings.
\end{proof}

Now we shall give an explicit counterexample to the converse of Lemma~\ref{flip}. Let $G$ be a disjoint union of $P_2$ and $P_2$, and let $H$ be a disjoint union of $P_1$ and $P_3$ (see Figure~\ref{example}) where $P_l$ is a path with $l$ edges.

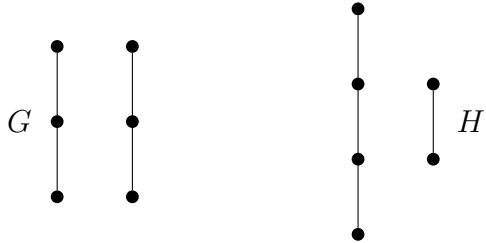
\begin{figure}[h]
\begin{center}
\begin{tikzpicture}[auto, xscale = 1, yscale = 1]
\tikzstyle{vertex}=[inner sep=0.5mm, thick, circle, draw=black!100, fill=black!100]

\node (v1) at (0,0) [vertex] {};
\node (v2) at (0,1) [vertex] {};
\node (v3) at (0,2) [vertex] {};
\node (v4) at (1,0) [vertex] {};
\node (v5) at (1,1) [vertex] {};
\node (v6) at (1,2) [vertex] {};
\draw (v1) -- (v3);
\draw (v4) -- (v6);
\node (vg) at (-0.5,1) {\large $G$};

\node (u1) at (4,-0.5) [vertex] {};
\node (u2) at (4,0.5) [vertex] {};
\node (u3) at (4,1.5) [vertex] {};
\node (u4) at (4,2.5) [vertex] {};
\node (u5) at (5,0.5) [vertex] {};
\node (u6) at (5,1.5) [vertex] {};
\draw (u1) -- (u4);
\draw (u5) -- (u6);
\node (ug) at (5.5,1) {\large $H$};

\end{tikzpicture}
\end{center}
\caption{Counterexample to the converse of Lemma~\ref{flip}.}
\label{example}
\end{figure}

Then $f(G)=f(H)=12$, but there is no bijection satisfying Proposition~\ref{flip2} part $(ii)$ since, for example, $G$ contains four vertices inducing no frustrated triangles while $H$ does not. Therefore, $G\not\sim H$.

Let us move on and provide some formulae for $f(G)$ in terms of other graph variables, which also gives us some bounds on $f(G)$.

\begin{prop}
\label{formula}
Let $G$ be a graph with $n$ vertices and $e$ edges. Write $p$ for the number of triangles in $G$, and $q$ for the number of pairs of independent edges in $G$. Then
\begin{enumerate}[(i)]
\item $f(G)=en-\sum_{v\in G}{d^2_v}+4p$
\item $f(G)=e(n-e-1)+4p+2q$
\item $f(G)\in[a_e,b_e]$
\end{enumerate}
where $d_v$ is the degree of vertex $v$.
\end{prop}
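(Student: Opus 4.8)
The plan is to prove (i) and (ii) by double counting, and then deduce (iii) as an easy consequence.

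For part (i), I would count pairs $(v, \{x,y\})$ where $v$ is a vertex of $G$ and $\{x,y\}$ is a pair of its neighbours or a pair consisting of one neighbour and one non-neighbour — in other words, I would count, for each vertex $v$, the edges of the triple $\{v,x,y\}$ incident to $v$ together with the contribution of the edge $xy$. More precisely, for a triple $T = \{x,y,z\}$ inducing $i$ edges, let me sum over the three vertices $v \in T$ the quantity $(\text{number of edges of } T \text{ at } v)$; this equals $2i$. On the other hand, fixing $v$, summing the number of edges at $v$ inside a triple over all triples containing $v$ gives $d_v(n-2)$ (each edge $vu$ lies in $n-2$ triples). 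Hence $\sum_T 2i(T) = \sum_v d_v(n-2) = 2e(n-2)$, which only recovers the classical identity $t_1 + 2t_2 + 3t_3 = e(n-2)$ and is not quite enough. Instead I would directly expand $f(G) = t_1 + t_3$. Writing $N = \binom{n}{3}$, we have $t_0 + t_1 + t_2 + t_3 = N$, $t_1 + 2t_2 + 3t_3 = e(n-2)$, and $t_3 = p$. The cleanest route is to note that for each vertex $v$, the number of frustrated triples through $v$ with the ``one edge at $v$'' pattern contributes, and a direct computation gives $\sum_v \binom{d_v}{2} = $ (number of cherries) $= t_2 + 3t_3$ and $\sum_v \binom{n-1-d_v}{2} = t_0 + $ (pairs of non-neighbours across) — I would assemble $en - \sum_v d_v^2 + 4p$ by combining $\sum_v d_v(n-1-d_v)$ (counting, for each $v$, neighbour/non-neighbour pairs, i.e. triples with exactly the edge pattern having exactly one edge at $v$ plus a correction) with the triangle count. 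The key algebraic fact is $\sum_v d_v(n-1-d_v) = en - \sum_v d_v^2 - \big(2e - 2e\big)\cdots$; carefully, $\sum_v d_v(n-1-d_v) = (n-1)\cdot 2e - \sum_v d_v^2$, and each triple with exactly one edge is counted once this way while each triangle is counted $0$ times and each two-edge triple $2$ times — so $\sum_v d_v(n-1-d_v) = t_1 + 2t_2$. Then using $t_1 + 2t_2 + 3t_3 = e(n-2)$ and $t_3 = p$ one gets $t_1 + t_3 = \big[t_1 + 2t_2\big] - \big[t_1 + 2t_2 + 3t_3\big] + e(n-2) - \cdots$; unwinding, $f(G) = en - \sum_v d_v^2 + 4p$ after substituting $\sum_v d_v(n-1-d_v) = 2e(n-1) - \sum_v d_v^2$ and $2e(n-1) - \sum d_v^2 = t_1 + 2t_2$, hence $t_1 = 2e(n-1) - \sum d_v^2 - 2t_2$ and $t_2 = \sum_v \binom{d_v}{2} - 3p = \tfrac12(\sum d_v^2 - 2e) - 3p$.

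For part (ii), I would rewrite (i) using the handshake-type identity. We have $q = \binom{e}{2} - \sum_v \binom{d_v}{2} = \binom{e}{2} - \tfrac12\big(\sum_v d_v^2 - 2e\big)$, since two edges are either independent or share a vertex, and the number of adjacent pairs of edges is $\sum_v \binom{d_v}{2}$. Solving gives $\sum_v d_v^2 = e^2 + e - 2q$. Substituting into (i):
\begin{equation*}
f(G) = en - (e^2 + e - 2q) + 4p = e(n - e - 1) + 2q + 4p,
\end{equation*}
which is exactly (ii).

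For part (iii), recall $a_e = e(n-e-1)$ and $b_e = e(n-2)$. From (ii) and $p,q \geq 0$ we immediately get $f(G) \geq e(n-e-1) = a_e$. For the upper bound $f(G) \leq b_e = e(n-2)$, I would use the classical edge identity $t_1 + 2t_2 + 3t_3 = e(n-2)$: since $f(G) = t_1 + t_3 \leq t_1 + 2t_2 + 3t_3 = e(n-2) = b_e$. This completes (iii), and the main obstacle is purely bookkeeping — getting the combinatorial identities $t_1 + 2t_2 = 2e(n-1) - \sum_v d_v^2$ and $q = \binom{e}{2} - \sum_v \binom{d_v}{2}$ stated and verified cleanly; there is no real difficulty beyond careful counting.
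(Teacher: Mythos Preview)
Your arguments for (ii) and (iii) are correct and essentially match the paper's: the identity $\sum_v d_v^2 = e(e+1)-2q$ via $\binom{e}{2}=q+\sum_v\binom{d_v}{2}$ is exactly what the paper uses, and your upper bound $t_1+t_3\le t_1+2t_2+3t_3=e(n-2)$ is a slight rephrasing of the paper's ``at most $e(n-2)$ triples contain an edge''.

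For (i), however, there is a counting slip. You claim $\sum_v d_v(n-1-d_v)=t_1+2t_2$, but in fact each one-edge triple is counted \emph{twice}, not once: if the sole edge is $xy$ and $z$ is the third vertex, then both $v=x$ (neighbour $y$, non-neighbour $z$) and $v=y$ (neighbour $x$, non-neighbour $z$) contribute. So the correct identity is $\sum_v d_v(n-1-d_v)=2t_1+2t_2$, i.e.\ $t_1+t_2=e(n-1)-\tfrac12\sum_v d_v^2$. Combined with your (correct) $t_2=\tfrac12(\sum_v d_v^2-2e)-3p$, this gives $t_1=en-\sum_v d_v^2+3p$ and hence $f(G)=t_1+p=en-\sum_v d_v^2+4p$, so the approach is salvageable with this fix. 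The paper takes a cleaner route: it sums over \emph{edges} rather than vertices, observing that for an edge $xy$ the number of triples on $xy$ inducing exactly one edge is $(n-2)-(d_x-1)-(d_y-1)+p_{xy}=n-d_x-d_y+p_{xy}$, and summing over $xy\in E(G)$ directly yields $t_1=en-\sum_v d_v^2+3p$ without passing through $t_2$.
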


\begin{proof}
$(i)$ Given an edge $xy$, let $T_{xy}=\{vxy:v\not=x,y\}$ be the set of triples on $xy$. Let $T_x=\{vxy\in T_{xy}:vx\in E(G)\}$ and $T_y=\{vxy\in T_{xy}:vy\in E(G)\}$. Then, the number of triples on $xy$ inducing exactly one edge is
	\begin{align*}
		|T_{xy}-(T_x\cup T_y)|
		&=|T_{xy}|-|T_x|-|T_y|+|T_x\cap T_y|\\
		&=(n-2)-(d_x-1)-(d_y-1)+p_{xy}\\
		&=n-d_x-d_y+p_{xy}
	\end{align*}
where $p_{xy}$ is the number of triangles containing $xy$. Hence,
	\begin{align*}
		f(G)
		&=p+\sum_{xy\in E(G)}{(\#\text{triples on $xy$ inducing exactly one edge)}}\\
		&=p+\sum_{xy\in E(G)}{(n-d_x-d_y+p_{xy})}\\
		&=p+en-\sum_{xy\in E(G)}{(d_x+d_y)}+3p\\
		&=en-\sum_{v\in G}{d^2_v}+4p
	\end{align*}
as required.

$(ii)$ This follows immediately from $(i)$. It is sufficient to show that $\sum_{v\in G}{d^2_v}=e(e+1)-2q$. Since a pair of edges is either independent or dependent, we have
		$$\binom{e}{2}=q+\sum_{v\in G}{\binom{d_v}{2}},$$
i.e.
		$$e(e-1)=2q+\sum_{v\in G}{d^2_v}-2e$$
as required.
	
$(iii)$ The upperbound is obvious:
	$$f(G)\leq (\#\text{triples containing at least one edge})\leq e(n-2)=b_e,$$
and the lower bound follows from $(ii)$:
	$$f(G)=e(n-e-1)+4p+2q\geq e(n-e-1)=a_e$$
as required.
\end{proof}

The proof of part $(iii)$ tells us that, amongst the graphs with $n$ vertices and $e\leq n/2$ edges, the $e$-matching is the only graph with the maximum number of frustrated triangles and, amongst the graphs with $n$ vertices and $e\leq n-1$ edges, the $e$-star is the only minimal graph.

Although part $(iii)$ looks like what we would like for Theorem~\ref{main1}, it only gives us good bounds when $e$ is small. For larger $e$, the lowerbound $a_e$ becomes worse and can even be negative. However, we do not have to apply $(iii)$ to $G$ directly. By Lemma $\ref{flip}$, we can apply $(iii)$ to any graph $H$ with $G\sim H$. Therefore, this motivates us to find a graph $H$ with few edges such that $G\sim H$, which gives rise to the following crucial definition.

For a graph $G$, let $t_G=\min\{e(H):G\sim H\}$ be the minimum number of edges we can have after some vertex flippings of $G$. Proposition~\ref{formula} part $(iii)$ implies that $f(G)\in [a_{t_G},b_{t_G}]$. Therefore, to prove Theorem~\ref{main1} part $(i)$, it is enough to show that $t_G\leq t_{max}$ if $f(G)<a_{t_{max}+1}$. It is important to note that $t_G$ is preserved under the flipping operation.

We now observe that $t_G$ can also be viewed as a measure of how close $G$ is to being a complete bipartite graph. We say that a pair of vertices $uv$ is \emph{odd} with respect to a bipartition $V(G)=X\cup X^c$ if
\begin{itemize}
\item $uv\not\in E(G)$ and $u,v$ are in different parts, or
\item $uv\in E(G)$ and $u,v$ are in the same part,
\end{itemize}
i.e. if $uv$ is not what it should be in the complete bipartite graph between $X,X^c$. Therefore, flipping the odd pairs w.r.t. $V(G)=X\cup X^c$ would result in the complete bipartite graph between $X,X^c$. We have the following equivalent definition for $t_G$.

\begin{prop}
\label{t_G}
For any graph $G$,
	$$t_G=\min\{\text{\#odd pairs w.r.t. }V(G)=X\cup X^c:X\subset V(G)\}.$$
\end{prop}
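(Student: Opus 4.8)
The plan is to show the two quantities are equal by producing inequalities in both directions, both of which are essentially book-keeping once the right dictionary between "edges" and "odd pairs" is set up. Recall that $t_G=\min\{e(H):G\sim H\}$, and that vertex flippings act on a bipartition in a transparent way: by the computation already used for complete bipartite graphs, flipping a vertex $v$ moves $v$ from its current side of any fixed reference bipartition to the other side while complementing exactly the pairs incident to $v$. The key observation I would record first is that, for a graph $H$ with vertex set $V$, the edge set $E(H)$ is precisely the set of odd pairs of $H$ with respect to the trivial bipartition $V=V\cup\emptyset$ (all vertices in one part): with that bipartition a pair is odd iff it is an edge. So $e(H)=\#\{\text{odd pairs of }H\text{ w.r.t. }V=V\cup\emptyset\}$.

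For the inequality $t_G\ge\min_X(\#\text{odd pairs})$, I would take an optimal $H$ with $G\sim H$ and $e(H)=t_G$, fix a sequence of vertex flippings carrying $G$ to $H$, and let $S\subseteq V(G)$ be the set of vertices flipped an \emph{odd} number of times along that sequence (order is irrelevant since flipping the same vertex twice is the identity, and distinct flips commute in their effect on each pair — each pair $uv$ is complemented once for each of $u,v$ flipped). Then $H$ is obtained from $G$ by flipping exactly the vertices of $S$. I claim the odd pairs of $G$ with respect to the bipartition $V(G)=S\cup S^c$ are exactly the edges of $H$: a pair $uv$ gets complemented (relative to $G$) iff exactly one of $u,v$ lies in $S$, i.e. iff $u,v$ are in different parts; so $uv\in E(H)$ iff ($uv\in E(G)$ and $u,v$ same part) or ($uv\notin E(G)$ and $u,v$ different parts), which is precisely the definition of $uv$ being odd w.r.t. $S\cup S^c$. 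Hence $\#\{\text{odd pairs of }G\text{ w.r.t. }S\cup S^c\}=e(H)=t_G$, giving $\min_X(\#\text{odd pairs})\le t_G$.

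For the reverse inequality, I would run the same correspondence backwards: given any bipartition $V(G)=X\cup X^c$ achieving the minimum number of odd pairs, flip every vertex of $X$ (in any order). The resulting graph $H$ satisfies $G\sim H$, and by the same parity argument $E(H)$ equals exactly the set of odd pairs of $G$ w.r.t. $X\cup X^c$, so $e(H)=\#\{\text{odd pairs w.r.t. }X\cup X^c\}$. Therefore $t_G\le e(H)=\min_X(\#\text{odd pairs})$, and combining the two inequalities yields equality.

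The only real content — and the one place to be slightly careful — is the claim that a sequence of vertex flippings depends, in its net effect on the graph, only on the parity of the number of times each vertex is flipped, i.e. only on the set $S$; everything else is definition-chasing. This is the step I would state explicitly (it follows since flipping $v$ is an involution on $G$ and the operations "flip $v$" and "flip $w$" commute for $v\ne w$, each pair $uv$ being toggled precisely $\lvert\{u,v\}\cap S\rvert$ times), and it is genuinely the crux that makes both directions of the argument go through.
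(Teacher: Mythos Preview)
Your proof is correct and follows essentially the same approach as the paper: the core observation in both is that for any subset $X\subseteq V(G)$, the set of odd pairs with respect to $X\cup X^c$ coincides exactly with the edge set of the graph obtained from $G$ by flipping each vertex in $X$, so the two minimizations range over the same family. The paper states this correspondence in one line and leaves the conclusion implicit; you unpack it into two inequalities and are more explicit about why an arbitrary sequence of vertex flippings reduces to flipping a single subset (commutation and involution), a point the paper silently takes for granted.
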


\begin{proof}
For $X\subset V(G)$, we have
\begin{align*}
	(\text{the }&\text{set of odd pairs with respect to }V(G)=X\cup X^c)\\
	&= E(G[X])\cup E(G[X^c])\cup E^c(G[X,X^c])\\
	&= (\text{the set of edges of the graph obtained from G by flipping each vertex in }X)
\end{align*}
where $E^c(G[X,X^c])$ is the set of nonedges between $X,X^c$.
\end{proof}

The proof also gives us another necessary and sufficient condition for $G\sim H$.
\begin{cor}
\label{flip3}
Let $G$ and $H$ be graphs on $n$ vertices. The following statements are equivalent.
\begin{itemize}
\item $G\sim H$.
\item There are a bijection $\phi:V(G)\to V(H)$ and a subset $X\subset V(G)$ such that $uv$ is an odd pair w.r.t. $V(G)=X\cup X^c$ in $G$ iff $\phi(u)\phi(v)$ is an edge in $H$.\qed
\end{itemize}
\end{cor}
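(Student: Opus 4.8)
The plan is to read off both implications of the equivalence directly from the computation carried out inside the proof of Proposition~\ref{t_G}, namely the identity stating that the graph obtained from $G$ by flipping every vertex of a set $X$ has edge set exactly the set of odd pairs with respect to the bipartition $V(G)=X\cup X^c$. The only additional ingredient is to re-express the relation $\sim$ in terms of flipping a \emph{set} of vertices rather than a \emph{sequence} of them.

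First I would record this reduction. Flipping one vertex is an involution on the set of graphs, and flipping two distinct vertices $u,v$ has the same effect in either order: the pair $uv$ gets flipped twice, hence is unchanged, while every other pair that is touched is flipped once. So the vertex-flipping operations commute and each is its own inverse; consequently any sequence of vertex flippings has the same net effect as flipping, exactly once each, those vertices that occur an odd number of times in the sequence. Writing $G_X$ for the graph obtained from $G$ by flipping each vertex of $X$, this shows that $G\sim H$ if and only if there is a set $X\subseteq V(G)$ with $G_X$ isomorphic to $H$ (symmetry of $\sim$, already noted, is what allows the flips to be placed on the $G$ side, and one may further note $G_X=G_{X^c}$, so only the bipartition $\{X,X^c\}$ matters).

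For the forward implication I would take such an $X$ together with an isomorphism $\phi\colon V(G_X)\to V(H)$, regarded as a bijection $\phi\colon V(G)\to V(H)$ since $V(G_X)=V(G)$. By definition of isomorphism, $uv\in E(G_X)$ iff $\phi(u)\phi(v)\in E(H)$; and by the computation in the proof of Proposition~\ref{t_G}, $E(G_X)=E(G[X])\cup E(G[X^c])\cup E^c(G[X,X^c])$ is precisely the set of odd pairs of $G$ with respect to $V(G)=X\cup X^c$. This yields condition $(ii)$. Conversely, given $X$ and $\phi$ satisfying $(ii)$, the same identity shows that $\phi$ is an edge-preserving bijection from $G_X$ to $H$, i.e.\ an isomorphism, so $G_X\cong H$ and hence $G\sim H$ by the reduction above.

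I expect no genuine obstacle: the corollary is essentially a repackaging of the proof of Proposition~\ref{t_G} together with the elementary fact that the flipping operations generate an (abelian, exponent-two) group action, so that only the support $X$ is relevant. The one point worth stating carefully is the passage between a sequence of flips and a single set-flip, together with the bookkeeping of which pairs are flipped an even versus an odd number of times.
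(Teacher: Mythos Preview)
Your proposal is correct and follows essentially the same route as the paper: the corollary is stated with a \qed immediately after the proof of Proposition~\ref{t_G}, so the paper's intended argument is exactly the one you give, namely to read off the equivalence from the identity $E(G_X)=\{\text{odd pairs w.r.t.\ }X\cup X^c\}$. The one thing you add beyond the paper is the explicit reduction from a \emph{sequence} of vertex flips to a \emph{set} of flips via commutativity and the involution property; the paper leaves this implicit, so your write-up is if anything more complete.
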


The definition of $t_G$ allows us to state a generalisation of Proposition~\ref{zero},
	$$f(G)\in [a_t,b_t]\text{ iff }t_G=t.$$
It is not hard to see that this is false for $t>t_{max}$ since $[a_t,b_t]$ and $[a_{t+1},b_{t+1}]$ overlap. On the other hand, Theorem~\ref{main1} part $(ii)$ states that this generalisation holds for $t\leq t_{max}$.

%%%%%%%%%%%%%%%%%%%%%%%%%%%%%%%%%%%%%%%%%%%%%%%%%%%%%%%%%%%%%%%%%%%%%%%%%%%%%%%%%%%%%%%%%%%%%%%%%%%

\section{Proof of Theorem~\ref{main1}}
\label{proof1}

We shall prove Theorem~\ref{main1} in the following simple but stronger form.

\begin{thm}
\label{main1-2}
Let $G$ be a graph on $n\geq 3$ vertices and let $t\geq 0$.
	\begin{enumerate}[(i)]
		\item If $f(G)<a_{t+1}$ then $t_G\leq t$.
		\item If $f(G)>b_{t-1}$ then $t_G\geq t$.
	\end{enumerate}
\end{thm}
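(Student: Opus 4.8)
The plan is to prove both parts by a single induction on $n$, mirroring the argument of Proposition~\ref{coffee} but keeping track of the full structural invariant $t_G$. Note first that the two parts are really one statement: since $t_G$ takes integer values, (i) says $f(G) < a_{t+1} \Rightarrow t_G \le t$, and (ii) is the contrapositive-flavoured companion $f(G) > b_{t-1} \Rightarrow t_G \ge t$, i.e. $t_G \le t-1 \Rightarrow f(G) \le b_{t-1}$, which is exactly the upper bound in Proposition~\ref{formula}(iii) applied after flipping to a graph with $t_G$ edges (here I use that $t_G$ is flip-invariant and that $b_s$ is increasing in $s$). So part (ii) requires no induction at all — it is immediate from Proposition~\ref{formula}(iii). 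The work is entirely in part (i).

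For part (i), I would argue by induction on $n$, the base case $n=3$ being trivial. Given $G$ on $n$ vertices with $f(G) < a_{t+1}^{(n)}$, first use Lemma~\ref{flip} to replace $G$ by a flip-equivalent graph with an isolated vertex $v$ (flip every neighbour of $v$); this changes neither $f(G)$ nor $t_G$. Set $G' = G - v$, so that $f(G) = f(G') + e(G')$ as in Proposition~\ref{coffee}, and also $t_G \le t_{G'} + 1$ — indeed, an optimal bipartition of $G'$ achieving $t_{G'}$ odd pairs, with $v$ thrown into whichever side keeps it a nonedge-respecting vertex, yields at most $t_{G'}$ odd pairs for $G$ (since $v$ is isolated, adding it to the side it "should" be in w.r.t. $G'$'s optimal bipartition costs nothing). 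Actually $t_G \le t_{G'}$, but $t_G \le t_{G'}+1$ is all I need. Now I want to show $t_{G'} \le t-1$, so that $t_G \le t$; by the induction hypothesis applied to $G'$ (on $n-1$ vertices), it suffices to show $f(G') < a_t^{(n-1)}$.

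The crux is therefore the numerical/structural step: from $f(G') + e(G') = f(G) < a_{t+1}^{(n)} = (t+1)(n-t-2)$, deduce $f(G') < a_t^{(n-1)} = t(n-t-2)$. Equivalently, I must show $e(G') > a_{t+1}^{(n)} - a_t^{(n-1)} - \text{(slack)}$ is impossible to violate, i.e. that $e(G')$ is large enough whenever $f(G')$ is close to its threshold. This is where the induction hypothesis must be bootstrapped: if $f(G') \ge a_t^{(n-1)}$, then by induction (part (i) in contrapositive form, or rather by the already-known structure) $t_{G'} \ge t$... but that's circular. Instead I expect the right move is a secondary induction or a direct counting argument bounding $e(H)$ from below in terms of $t_H$ for graphs $H$ with $t_H$ small — essentially, a graph that is $t$ flips away from complete bipartite but not $(t-1)$ flips away must have at least roughly $(\text{smallest nonempty }c_x) - t \sim (n-1) - t$ edges after optimal flipping in the "star-like" regime, versus the matching-like regime. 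Concretely I would split into cases according to whether the near-optimal bipartition of $G'$ is balanced or unbalanced (as in Case 2 of Proposition~\ref{coffee}, where $e(G') = x(n-1-x)$ is minimised at the ends), and push the induction through each case. The main obstacle, and the place I'd expect to spend the most effort, is exactly this: controlling $e(G')$ from below in terms of $t_{G'}$ tightly enough that the arithmetic $a_{t+1}^{(n)} - a_t^{(n-1)} = n - 2t - 2$ worth of edges is guaranteed — this is presumably where the precise value of $t_{\max}$ and the inequality $t(t+1) < n-2$ enter, ensuring the gaps $a_{t+1} - b_t = (n-2) - t(t+1)$ stay positive and the induction does not collapse.
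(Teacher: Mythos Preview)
Your treatment of part (ii) is fine and matches the paper exactly. Part (i), however, has a concrete error and a genuine missing idea.

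\textbf{The error.} Your claim that $t_G \le t_{G'}$ (or even $t_G \le t_{G'}+1$) when $v$ is isolated is false. If $X' \cup Y'$ is a bipartition of $G'$ and you place $v$ in, say, $Y'$, then every pair $vw$ with $w \in X'$ is a nonedge that \emph{should} be an edge in the complete bipartite graph, so you pick up $|X'|$ new odd pairs, not zero. Thus you only get $t_G \le t_{G'} + \min(|X'|,|Y'|)$, and the minimum can be as large as $\lfloor (n-1)/2 \rfloor$. A concrete counterexample: $G' = K_{2,2}$ has $t_{G'}=0$, but $G' + \text{(isolated vertex)}$ has $t_G = 2$.

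\textbf{The missing idea.} Because of this, your plan to prove $t_{G'} \le t-1$ via $f(G') < a_t^{(n-1)}$ cannot succeed: that inequality is simply not always true, and even if it were, the conclusion $t_{G'}\le t-1$ would not by itself give $t_G\le t$. The paper's argument is more delicate. After reducing to $e(G') \ge t+1$ (the case $e(G')\le t$ being trivial since then $t_G \le e(G) = e(G') \le t$), one gets $f(G') < a^{(n-1)}_{t+1}$ and hence $f(G') \in [a^{(n-1)}_s, a^{(n-1)}_{s+1})$ for some $s \le t$. The induction hypothesis then gives a bipartition $X\cup Y$ of $G'$ with at most $s$ odd pairs, and the argument splits on the size $x$ of the smaller part $X$: if $x \le t-s$, placing $v$ with $Y$ gives at most $s+x \le t$ odd pairs in $G$; if $x \ge t-s+1$, then $e(G') \ge x(n-1-x) - s \ge a_{t-s+1} - s$, which combined with $f(G') \ge a^{(n-1)}_s$ forces $f(G) \ge a_s + a_{t-s+1} - 2s \ge a_{t+1}$, a contradiction. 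You were circling this case split in your last paragraph, but the crucial parameter $s$ and the algebraic inequality $a_s + a_{t-s+1} - 2s \ge a_{t+1}$ (equivalent to $s\le t$) are what actually make the induction close.
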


Note that Theorem~\ref{main1-2} part $(i)$ covers a larger range than Theorem~\ref{main1}. Indeed, it describes the structure of graphs $G$ with $f(G)\lesssim n^2$, since $a_{t+1}$ can be as large as $\left(\frac{n-1}{2}\right)^2$.

Before proving Theorem~\ref{main1-2}, let us show that it immediately implies Theorem~\ref{main1}.

\begin{proof}[Proof of Theorem~\ref{main1}]
$(i)$ Suppose that $f(G)<a_{t_{max}+1}$. Then, by Theorem~\ref{main1-2} part $(i)$, $t_G\leq t_{max}$ and so we are done since $f(G)\in[a_{t_G},b_{t_G}]$ by Proposition~\ref{formula} part $(iii)$. The uniqueness of $t$ is trivial since $[a_0,b_0],[a_1,b_1,],\dots,[a_{t_{max}},b_{t_{max}}]$ are disjoint.

%It is equivalent to show that $f(G)\not\in(b_t,a_{t+1})$ for all $t\geq 0$ since $[a_0,b_0],[a_1,b_1,],\dots,[a_{t_{max}},b_{t_{max}}]$ are disjoint. Suppose that $f(G)<a_{t+1}$ for some $t$. Then, by Theorem~\ref{main1-2} part $(i)$, $t_G\leq t$ and so, by Proposition~\ref{formula} part $(iii)$, $f(G)\leq b_{t_G}= t_G(n-2)\leq t(n-2)=b_t$ as required.

$(ii)$ $(\Rightarrow)$ Suppose that $f(G)\in [a_t,b_t]$ for some $t\leq t_{max}$. By Proposition~\ref{t_G}, it is sufficient to show that $t_G=t$. Since $t\leq t_{max}$, we have $f(G)\in [a_t,b_t]\subset (b_{t-1},a_{t+1})$, and so $t_G= t$ by Theorem~\ref{main1-2}.

$(\Leftarrow)$ Suppose that $G$ can be obtained from a complete bipartite graph by flipping exactly $t\leq t_{max}$ pairs of vertices. Equivalently, $G\sim H$ for some graph $H$ with $e(H)=t$ by Corollary~\ref{flip3}. Hence, by  Proposition~\ref{formula} part $(iii)$,
	$$f(G)=f(H)\in[a_{e(H)},b_{e(H)}]=[a_t,b_t]$$
as required.
\end{proof}

Now let us prove Theorem~\ref{main1-2}. Note that part $(ii)$ is easy, and the main content is in part $(i)$.

\begin{proof}[Proof of Theorem~\ref{main1-2}]
We start by proving part $(ii)$, which is equivalent to the statement: if $t_G\leq t$ then $f(G)\leq b_t$. By Proposition~\ref{formula} part $(iii)$, $f(G)\leq b_{t_G}\leq b_t$ since $b_t$ is increasing in $t$.

We now prove part $(i)$ by induction on $n$. It is easy to check for $n=3$. Let $G$ be a graph on $n$ vertices such that $f(G)<a_{t+1}$. Our aim is to find a bipartition of $V(G)$ with at most $t$ odd pairs. We write $a_s$ for $a^n_s$ and $a'_s$ for $a^{n-1}_s$. Since $f(G)$ and $t_G$ are preserved under the flipping operation, we may assume that $G$ has an isolated vertex $v$ (as in the proof of Proposition~\ref{coffee}). Let $G'=G-v$, and note that $f(G)=f(G')+e(G')$. Now observe that we are done if $e(G')\leq t$, since $t_G\leq e(G)=e(G')$ by the definition of $t_G$. So we may assume that $e(G')\geq t+1$, and hence,
	$$f(G')=f(G)-e(G')\leq f(G)-(t+1)<a_{t+1}-(t+1)=a'_{t+1}.$$ 
Therefore, we may now assume that $f(G')\in[a'_s,a'_{s+1})$ for some $s=0,1,\dots,t$. Since $f(G')<a'_{s+1}$, we have, by the induction hypothesis, that $G'$ has a bipartition $V(G')=X\cup Y$ with at most $s$ odd pairs. Without loss of generality, $X$ is the smaller part and we write $x$ for $|X|$. We shall distinguish two cases.

\textbf{Case 1: $x\leq t-s$}\\
With respect to the bipartition $V(G)=X\cup (Y\cup\{v\})$, the number of odd pairs is at most $x+s\leq t$ as required.

\textbf{Case 2: $x\geq t-s+1$}\\
Since $G'$ contains at most $s$ odd pairs w.r.t. the biparition $V(G')=X\cup Y$ and $x\leq \frac{n-1}{2}$, we have
	$$e(G')\geq x(n-1-x)-s=a_x-s\geq a_{t-s+1}-s,$$
and so
	$$f(G)=f(G')+e(G')\geq a'_s+a_{t-s+1}-s=a_s+a_{t-s+1}-2s.$$
It is sufficient to show that $a_s+a_{t-s+1}-2s\geq a_{t+1}$ since it would contradict the fact that $f(G)<a_{t+1}$. The inequality is equivalent to
	$$[s(n-1)-s^2]+[(t-s+1)(n-1)-(t-s+1)^2]-2s\geq (t+1)(n-1)-(t+1)^2,$$
i.e.
	$$(t+1)^2-(t-s+1)^2\geq s^2+2s.$$
That is, $2st\geq 2s^2$ which holds for $0\leq s\leq t$ as required.
\end{proof}

Since $F_n$ is symmetric, we automatically have, by taking complements, the corresponding result for $f(G)\in[\binom{n}{3}-n^{3/2},\binom{n}{3}]$.

\begin{cor}
Let $G$ be a graph on $n\geq 3$ vertices.
\begin{enumerate}[(i)]
\item If $f(G)>\binom{n}{3}-a_{t_{max}+1}$ then $f(G)\in[\binom{n}{3}-b_t,\binom{n}{3}-a_t]$ for a unique $t\leq t_{max}$.
\item If $t\leq t_{max}$ then $f(G)\in[\binom{n}{3}-b_t,\binom{n}{3}-a_t]$ iff $G$ can be obtained from a disjoint union of two cliques of orders summing to $n$ by flipping exactly $t$ pairs of vertices.\qed
\end{enumerate}
\end{cor}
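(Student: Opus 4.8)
The final statement to prove is the Corollary asserting that the symmetry of $F_n$ automatically yields the complementary result for $f(G)$ near $\binom{n}{3}$, with the complete bipartite graph replaced by a disjoint union of two cliques.

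The plan is to deduce this directly from Theorem~\ref{main1} by taking complements, using the identity $f(G)+f(\overline G)=\binom{n}{3}$ observed in the introduction. First I would set $H=\overline G$, so that $f(H)=\binom{n}{3}-f(G)$. For part $(i)$: the hypothesis $f(G)>\binom{n}{3}-a_{t_{max}+1}$ translates to $f(H)<a_{t_{max}+1}$, so Theorem~\ref{main1} part $(i)$ gives a unique $t\leq t_{max}$ with $f(H)\in[a_t,b_t]$; complementing back yields $f(G)\in[\binom{n}{3}-b_t,\binom{n}{3}-a_t]$, and uniqueness is inherited.

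For part $(ii)$, the key observation is that complementation commutes with flipping a pair of vertices: if $G'$ is obtained from $G$ by flipping a pair $uv$, then $\overline{G'}$ is obtained from $\overline G$ by flipping the same pair $uv$ (flipping toggles the edge/nonedge status, which is exactly what complementation does, restricted to that pair). Hence $G$ is obtained from a complete bipartite graph $K$ by flipping exactly $t$ pairs if and only if $\overline G$ is obtained from $\overline K$ by flipping exactly $t$ pairs. Since the complement of a complete bipartite graph $K_{x,n-x}$ is the disjoint union $K_x\cup K_{n-x}$ of two cliques of orders summing to $n$, the equivalence in Theorem~\ref{main1} part $(ii)$ applied to $\overline G$ (with $f(\overline G)\in[a_t,b_t]\iff f(G)\in[\binom{n}{3}-b_t,\binom{n}{3}-a_t]$) gives precisely the claimed statement.

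There is essentially no obstacle here — the corollary is a formal consequence of the central symmetry of $F_n$ together with the fact that both the frustration-status of triples and the flipping operation behave well under taking complements; the only point requiring a line of justification is that flipping and complementation commute, which is immediate from the definitions. I would present the argument in one short paragraph and mark it $\qed$, exactly as the paper does.
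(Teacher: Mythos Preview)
Your proposal is correct and follows exactly the approach the paper intends: the corollary is stated with a bare $\qed$, relying on the preceding remark that ``Since $F_n$ is symmetric, we automatically have, by taking complements, the corresponding result.'' Your explicit justification---using $f(G)+f(\overline G)=\binom{n}{3}$, the fact that complementation commutes with flipping a pair, and that $\overline{K_{x,n-x}}=K_x\cup K_{n-x}$---simply fills in the details the paper leaves implicit.
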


%%%%%%%%%%%%%%%%%%%%%%%%%%%%%%%%%%%%%%%%%%%%%%%%%%%%%%%%%%%%%%%%%%%%%%%%%%%%%%%%%%%%%%%%%%%%%%%%%%%

\section{Proof of Theorem~\ref{main2}}
\label{proof2}

Before we prove Theorem~\ref{main2}, we shall observe that the number of frustrated triangles satisfies the following parity condition.

\begin{prop}
\label{parity}
Let $G$ be a graph on $n$ vertices. Then
\begin{itemize}
	\item $f(G)$ is even for $n$ even.
	\item $f(G)$ has the same parity as $e(G)$ for $n$ odd.
\end{itemize}
\end{prop}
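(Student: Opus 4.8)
The plan is to prove Proposition~\ref{parity} by induction on the number of edges $e = e(G)$, using the observation already highlighted in the text: adding (or deleting) a single edge to a graph on $n$ vertices changes the frustration status of exactly $n-2$ triples, namely the triples containing both endpoints of that edge. Hence passing from $G$ to $G'$ by adding one edge changes $f$ by $(n-2) - 2k$ for some integer $k$ (where $k$ is the number of the $n-2$ affected triples that were frustrated in $G$), so $f(G') - f(G) \equiv n-2 \equiv n \pmod 2$.

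The base case is the empty graph $E_n$, for which $f(E_n) = 0$, which is even and (trivially) has the same parity as $e(E_n) = 0$. For the inductive step, write $G = G' + xy$ where $G'$ has one fewer edge. By the observation above, $f(G) \equiv f(G') + (n-2) \pmod 2$. If $n$ is even, then $n - 2$ is even, so $f(G) \equiv f(G') \equiv 0 \pmod 2$ by induction. If $n$ is odd, then $n - 2$ is odd, so $f(G) \equiv f(G') + 1 \equiv e(G') + 1 = e(G) \pmod 2$ by induction. This closes the induction.

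Alternatively, one could read the statement straight off Proposition~\ref{formula} part~$(i)$: $f(G) = en - \sum_{v} d_v^2 + 4p$. Working modulo $2$, the term $4p$ vanishes, and $d_v^2 \equiv d_v \pmod 2$, so $\sum_v d_v^2 \equiv \sum_v d_v = 2e \equiv 0 \pmod 2$; thus $f(G) \equiv en \pmod 2$. If $n$ is even this is $0$; if $n$ is odd this is $\equiv e \pmod 2$. This gives an even shorter proof, and I would likely present this one as the main argument and mention the inductive/edge-flipping viewpoint as the conceptual reason (it is also the version that generalizes to the discussion preceding the proposition).

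There is essentially no obstacle here; the only thing to be careful about is making sure the ``$n-2$ triples'' claim is stated cleanly — a triple $\{x,y,z\}$ has its induced edge count changed by the addition of $xy$ precisely when $\{x,y\} \subseteq \{x,y,z\}$, i.e. for the $n-2$ choices of the third vertex $z$ — and that $\sum_v d_v = 2e$ is invoked correctly in the congruence $d_v^2 \equiv d_v$. I would keep the write-up to a few lines using the formula-based argument.
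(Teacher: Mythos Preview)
Your proposal is correct. Your inductive argument via single-edge deletions is precisely the paper's proof: the paper writes $f(G-xy)=f(G)-|V_1|+|V_2|$ with $|V_1|+|V_2|=n-2$, deduces $f(G-xy)\equiv f(G)+n\pmod 2$, and then chains down to the empty graph exactly as you do.

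Your alternative via Proposition~\ref{formula}(i), reading $f(G)\equiv en\pmod 2$ from $d_v^2\equiv d_v$ and $\sum_v d_v=2e$, is a genuinely different and slightly slicker route. It trades the paper's self-contained edge-flipping computation for a one-line reduction that relies on the earlier formula; the paper's version has the advantage of also yielding the quantitative byproduct $|f(G-e)-f(G)|\le n-2$, which is used immediately afterward, so if you present the formula argument as primary you should still record that inequality separately.
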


\begin{proof}
Given an edge $xy$ of $G$, let $V_1=\{v\in G\backslash\{x,y\}:vxy\text{ is frustrated}\}$ be the set of vertices forming a frustrated triangle with $xy$, and let $V_2=\{v\in G\backslash\{x,y\}:vxy\text{ is not frustrated}\}$. Deleting the edge $xy$ changes the parity of the number of edges induced by $vxy$. Therefore, $f(G-xy)=f(G)-|V_1|+|V_2|$. Since $|V_1|+|V_2|=n-2$, we have
	$$f(G-xy)\equiv f(G)+n\mod{2}.$$
If $n$ is even, we see that $f(G)\equiv f(G-e_1)\equiv f(G-e_1-e_2)\equiv \dots\equiv f(E_n)=0\mod{2}$. If $n$ is odd, we see that $f(G)\equiv f(G-e_1)+1\equiv f(G-e_1-e_2)+2\equiv \dots\equiv f(E_n)+e(G)=e(G)\mod{2}$.
\end{proof}

We also see from the proof that $|f(G-e)-f(G)|\leq n-2$; therefore, $F_n$ can miss at most $n-3$ consecutive integers. The next corollary follows from Proposition~\ref{parity} and Theorem~\ref{main1}.

\begin{cor}
\label{possible}
For $t\leq t_{max}$, we have
	$$F_n\cap[a_t,b_t]\subset\{a_t,a_t+2,\dots,b_t-2,b_t\}.$$
\end{cor}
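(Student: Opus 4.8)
The plan is to combine the structural characterisation in Theorem~\ref{main1} part $(ii)$ with the parity constraint in Proposition~\ref{parity}, after reducing to a convenient representative graph. Suppose $f(G)\in[a_t,b_t]$ with $t\le t_{\max}$. By Theorem~\ref{main1} part $(ii)$ (together with the equivalence recorded via Corollary~\ref{flip3}, namely that being obtained from a complete bipartite graph by flipping exactly $t$ pairs is the same as $G\sim H$ for some $H$ with $e(H)=t$), there is a graph $H$ on $n$ vertices with $e(H)=t$ and $G\sim H$, hence $f(G)=f(H)$ by Lemma~\ref{flip}. The whole point of passing from $G$ to $H$ is that the edge count of $H$ is pinned to be exactly $t$, which is what governs the parity of $f(H)$ when $n$ is odd.

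Next I would apply Proposition~\ref{parity} to $H$: if $n$ is even then $f(G)=f(H)$ is even, and if $n$ is odd then $f(G)=f(H)\equiv e(H)=t\pmod 2$. It then remains to verify that $a_t$ and $b_t$ carry precisely this parity. From $a_t=t(n-t-1)$ and $b_t=t(n-2)$ one computes: for $n$ even, $a_t\equiv t(t+1)\equiv 0$ and $b_t\equiv tn\equiv 0\pmod 2$; for $n$ odd, $a_t\equiv t^2\equiv t$ and $b_t\equiv t(n-2)\equiv t\pmod 2$. In all cases $a_t\equiv b_t\pmod 2$, and this common residue equals the parity of $f(G)$ forced above, so $f(G)\in\{a_t,a_t+2,\dots,b_t-2,b_t\}$, which is exactly the assertion of the corollary.

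I do not expect a genuine obstacle here; the result is short once Theorem~\ref{main1} is available. The one point requiring care is that Proposition~\ref{parity} by itself only gives $f(G)\equiv e(G)\pmod 2$ when $n$ is odd, and $e(G)$ is not determined by $f(G)$ alone, so it is essential to first replace $G$ by the representative $H$ with $e(H)=t$ supplied by Theorem~\ref{main1}. Everything else is the elementary parity computation above, together with the incidental observation that $b_t-a_t=t(t-1)$ is automatically even, so that the arithmetic progression $\{a_t,a_t+2,\dots,b_t\}$ is well-defined. (For $n$ even one can in fact skip the reduction to $H$ entirely, since Proposition~\ref{parity} already forces $f(G)$ even and the computation above shows $a_t,b_t$ are even; the reduction is only needed for the odd case.)
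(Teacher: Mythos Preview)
Your proof is correct and follows essentially the same approach as the paper: both split into the cases $n$ even (where Proposition~\ref{parity} alone suffices since $a_t,b_t$ are even) and $n$ odd (where Theorem~\ref{main1}(ii) is invoked to pass to $H$ with $e(H)=t$, then Proposition~\ref{parity} gives $f(G)=f(H)\equiv t\equiv a_t\equiv b_t\pmod 2$). Your presentation is slightly more detailed in verifying the parities of $a_t$ and $b_t$, but the argument is the same.
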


\begin{proof}
If $n$ is even, this follows immediately from Proposition~\ref{parity} since $a_t,b_t$ are even. Let $n$ be odd and let $G$ be a graph on $n$ vertices with $f(G)\in[a_t,b_t]$ for some $t\leq t_{max}$. By Theorem~\ref{main1} part $(ii)$, $G$ can be obtained from a complete bipartite graph by flipping exactly $t$ pairs of vertices. Equivalently, $G\sim H$ for some graph $H$ with $e(H)=t$. Hence, by Proposition~\ref{parity}, we have
	$$f(G)=f(H)\equiv t\equiv a_t\equiv b_t\mod{2}$$
as required.
\end{proof}

We shall now prove Theorem~\ref{main2} part $(i)$. The proof is by construction of graphs consisting of four parts. By modifying the first part of the graphs, we obtain a sequence of even numbers belonging to $F_n$ in the required interval with gaps at most $n-2$. Next the modification of the second part refines the partition of such interval such that the gaps are now at most $2(\sqrt{n}-1)$. Then the third part reduces the gaps to at most $2(\sqrt[4]{4n}-1)$. Finally, we modify the fourth part to obtain all even numbers in the interval.

\begin{proof}[Proof of Theorem~\ref{main2} part (i)]
Let $n$ be even. First, we shall note that this is best possible up to the second order term. Let $s$ be the maximum $t$ such that $b_t+2<a_{t+1}$ and so $s\sim \sqrt{n}$. Since $b_s+2\in(b_s,a_{s+1})$, we have $b_s+2=s(n-2)+2\sim n^{3/2}$ is even but is not a member of $F_n$ by Theorem~\ref{main1-2}.

Now, for each even number $m\in[n^{3/2}+2\sqrt{2}n^{5/4},\binom{n}{3}-(n^{3/2}+2\sqrt{2}n^{5/4})]$, our aim is to construct a graph $G$ on $n$ vertices with $f(G)=m$. Since $F_n$ is symmetric about $\frac{1}{2}\binom{n}{3}$, it is sufficient to do so for each even number $m\in[n^{3/2}+2\sqrt{2}n^{5/4},\frac{1}{2}\binom{n}{3}]$. Let $G$ be a graph on $n$ vertices containing $\lceil\sqrt{n}\rceil+2\lceil\sqrt[4]{4n}\rceil$ independent edges. For simplicity, we shall write $s_1=\lceil\sqrt{n}\rceil$ and $s_2=\lceil\sqrt[4]{4n}\rceil$. Let $H$ be a graph obtained from $G$ by adding all the edges between the isolated vertices of $G$, i.e. $H$ is a disjoint union of a complete graph of order $r=n-2s_1-4s_2$ and a matching of size $s_1+2s_2$. Then $f(G)=(s_1+2s_2)(n-2)$ and $f(H)=\binom{r}{3}+\binom{r}{2}(n-r)+(s_1+2s_2)(n-2)$. It is sufficient to show that every even number between $f(G)$ and $f(H)$ belongs to $F_n$ since $f(G)\leq n^{3/2}+2\sqrt{2}n^{5/4}$ and $f(H)\geq \binom{n-2\sqrt{n}-4\sqrt[4]{4n}}{3}\geq \frac{1}{2}\binom{n}{3}$ for sufficiently large $n$.

We shall break $G$ into four parts and modify each part separately to obtain new graphs. Let $V(G)=V_1\sqcup V_2\sqcup V_3\sqcup V_4$ where
\begin{itemize}
	\item	$G[V_1]$ is empty with $|V_1|=r$,
	\item $G[V_2]$ is a matching of size $\sqrt{n}$ with $|V_2|=2\sqrt{n}$, and
	\item $G[V_3]$ and $G[V_4]$ are matchings of size $\sqrt[4]{4n}$ with $|V_3|=|V_4|=2\sqrt[4]{4n}$, and
	\item $G[V_i,V_j]$ is empty for all $i\not= j$.
\end{itemize}
Now we add an edge one by one inside $G[V_1]$ until we obtain $H$. Each time we add an edge, $f$ can change by at most $n-2$ by the proof of Proposition~\ref{parity}. Hence, $F_n$ contains a sequence of even numbers $f(G)=f(G_1),f(G_2),\dots,f(G_{\binom{r}{2}})=f(H)$ with $|f(G_i)-f(G_{i+1})|\leq n-2$ for all $i$. Therefore, it is sufficient to show that $F_n$ contains every even number between $f(G_i)$ and $f(G_i)-(n-2)$ for all $i$.

Let us fix $i$. By construction, $V(G_i)=V_1\sqcup V_2\sqcup V_3\sqcup V_4$ where $V_1$ induces $i$ edges, $V_2$ induces a matching of size $\sqrt{n}$ and each of $V_3,V_4$ induces a matching of size $\sqrt[4]{4n}$. We shall modify $G_i[V_2],G_i[V_3]$ and $G_i[V_4]$ to obtain new graphs. Let $\{x_1y_1,\dots,x_{\sqrt{n}}y_{\sqrt{n}}\}$ be the matching inside $V_2$. First, we delete $x_2y_2$ and replace it with $x_1y_2$. This decreases $f$ by $2$. Next, we delete $x_3y_3$ and replace it with $x_1y_3$ which decreases $f$ by $4$ more. We continue similarly (see Figure~\ref{star}). When we delete $x_jy_j$ and replace it with $x_1y_j$, it decreases $f$ by $2(j-1)$.

\begin{figure}[h]
\begin{center}
\begin{tikzpicture}[auto, xscale = .6, yscale = 1.2]
\tikzstyle{vertex}=[inner sep=0.5mm, thick, circle, draw=black!100, fill=black!100]

\node (x1) at (0,1) [vertex] {};
\node (y1) at (0,0) [vertex] {};
\node (x2) at (1,1) [vertex] {};
\node (y2) at (1,0) [vertex] {};
\node (x3) at (2,1) [vertex] {};
\node (y3) at (2,0) [vertex] {};
\node (x4) at (4,1) [vertex] {};
\node (y4) at (4,0) [vertex] {};
\draw (x1) -- (y1);
\draw (x2) -- (y2);
\draw (x3) -- (y3);
\draw (x4) -- (y4);
\node at (0,1.3) {$x_1$};
\node at (0,-0.3) {$y_1$};
\node at (1,1.3) {$x_2$};
\node at (1,-0.3) {$y_2$};
\node at (2,1.3) {$x_3$};
\node at (2,-0.3) {$y_3$};
\node at (4.3,1.25) {$x_{\sqrt{n}}$};
\node at (4.3,-0.35) {$y_{\sqrt{n}}$};
\draw (2.5,0.5) -- (3.5,0.5) [thick,dotted];
\draw (5.5,0.5) -- (6.5,0.5) [-angle 90];

\node (x1) at (8,1) [vertex] {};
\node (y1) at (8,0) [vertex] {};
\node (x2) at (9,1) [vertex] {};
\node (y2) at (9,0) [vertex] {};
\node (x3) at (10,1) [vertex] {};
\node (y3) at (10,0) [vertex] {};
\node (x4) at (12,1) [vertex] {};
\node (y4) at (12,0) [vertex] {};
\draw (x1) -- (y1);
\draw (x1) -- (y2);
\draw (x3) -- (y3);
\draw (x4) -- (y4);
\node at (8,1.3) {$x_1$};
\node at (8,-0.3) {$y_1$};
\node at (9,1.3) {$x_2$};
\node at (9,-0.3) {$y_2$};
\node at (10,1.3) {$x_3$};
\node at (10,-0.3) {$y_3$};
\node at (12.3,1.25) {$x_{\sqrt{n}}$};
\node at (12.3,-0.35) {$y_{\sqrt{n}}$};
\draw (10.5,0.5) -- (11.5,0.5) [thick,dotted];
\draw (13.5,0.5) -- (14.5,0.5) [-angle 90];

\node (x1) at (16,1) [vertex] {};
\node (y1) at (16,0) [vertex] {};
\node (x2) at (17,1) [vertex] {};
\node (y2) at (17,0) [vertex] {};
\node (x3) at (18,1) [vertex] {};
\node (y3) at (18,0) [vertex] {};
\node (x4) at (20,1) [vertex] {};
\node (y4) at (20,0) [vertex] {};
\draw (x1) -- (y1);
\draw (x1) -- (y2);
\draw (x1) -- (y3);
\draw (x4) -- (y4);
\node at (16,1.3) {$x_1$};
\node at (16,-0.3) {$y_1$};
\node at (17,1.3) {$x_2$};
\node at (17,-0.3) {$y_2$};
\node at (18,1.3) {$x_3$};
\node at (18,-0.3) {$y_3$};
\node at (20.3,1.25) {$x_{\sqrt{n}}$};
\node at (20.3,-0.35) {$y_{\sqrt{n}}$};
\draw (18.5,0.5) -- (19.5,0.5) [thick,dotted];
\draw (21.5,0.5) -- (22.5,0.5) [-angle 90];

\draw (23.5,0.5) -- (24.5,0.5) [thick,dotted];

\end{tikzpicture}
\end{center}
\caption{Star accumulation of $V_2$.}
\label{star}
\end{figure}
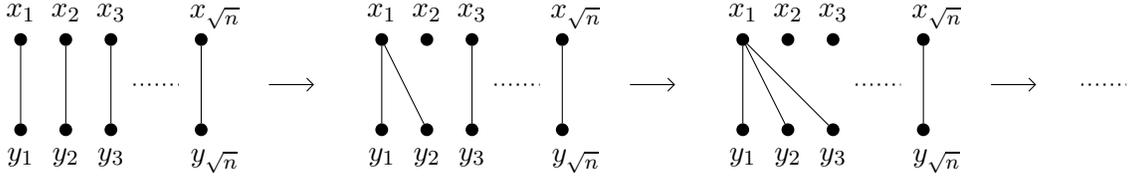

Therefore, $F_n$ contains a decreasing sequence of even numbers $f(G_i)=f(G_{i,1}),f(G_{i,2}),\dots,\allowbreak f(G_{i,\sqrt{n}})$ with $|f(G_{i,j})-f(G_{i,j+1})|\leq 2(\sqrt{n}-1)$ for all $j$. Moreover, $f(G_{i,\sqrt{n}})$ is larger than $f(G_i)-(n-2)$ by at most $2(\sqrt{n}-1)$. Indeed,
	\begin{align*}
		f(G_{i,\lceil\sqrt{n}\rceil})
		&=f(G_i)-(2+4+\dots+2(\lceil\sqrt{n}\rceil-1))\\
		&=f(G_i)-(\lceil\sqrt{n}\rceil-1)\lceil\sqrt{n}\rceil\\
		&\leq f(G_i)-(n-2)+2(\lceil\sqrt{n}\rceil-1).
	\end{align*}
Hence, it is sufficient to show that $F_n$ contains every even number between $f(G_{i,j})$ and $f(G_{i,j})-2(\sqrt{n}-1)$ for all $j$. Let us fix $j$ and we shall modify the third part of the graph $G_{i,j}$ as follows. Let $\{x'_1y'_1,\dots,x'_{\sqrt[4]{4n}}y'_{\sqrt[4]{4n}}\}$ be the matching inside $V_3$. We perform the same `star' procedure as the second part. First, we delete $x'_2y'_2$ and replace it with $x'_1y'_2$. This decreases $f$ by $2$. Next, we delete $x'_3y'_3$ and replace it with $x'_1y'_3$ which decreases $f$ by $4$ more. We continue similarly. When we delete $x'_ky'_k$ and replace it with $x'_1y'_k$, it decreases $f$ by $2(k-1)$.

Therefore, $F_n$ contains a decreasing sequence of even numbers $f(G_{i,j})=f(G_{i,j,1}),f(G_{i,j,2}),\dots,\allowbreak f(G_{i,j,\sqrt[4]{4n}})$ with $|f(G_{i,j,k})-f(G_{i,j,k+1})|\leq 2(\sqrt[4]{4n}-1)$ for all $k$. Moreover, $f(G_{i,j,\sqrt[4]{4n}})$ is larger than $f(G_{i,j})-2(\sqrt{n}-1)$ by at most $2(\sqrt[4]{4n}-1)$. Indeed,
	\begin{align*}
		f(G_{i,j,\lceil\sqrt[4]{4n}\rceil})
		&=f(G_{i,j})-(2+4+\dots+2(\lceil\sqrt[4]{4n}\rceil-1))\\
		&=f(G_{i,j})-(\lceil\sqrt[4]{4n}\rceil-1)\lceil\sqrt[4]{4n}\rceil\\
		&\leq f(G_{i,j})-2(\lceil\sqrt{n}\rceil-1)+2(\lceil\sqrt[4]{4n}\rceil-1).
	\end{align*}
Hence, it is sufficient to show that $F_n$ contains every even number between $f(G_{i,j,k})$ and $f(G_{i,j,k})-2(\sqrt[4]{4n}-1)$ for all $k$. To prove this, let us fix $k$ and we shall modify the fourth part of the graph $G_{i,j,k}$ as follows. Let $\{z_1w_1,\dots,z_{\sqrt[4]{4n}}w_{\sqrt[4]{4n}}\}$ be the matching inside $V_4$. First, we delete $z_2w_2$ and replace it with $w_1w_2$. This decreases $f$ by $2$. Next, we delete $z_3w_3$ and replace it with $w_2w_3$ which decreases $f$ by $2$ more. We continue similarly (see Figure~\ref{path}). When we delete $z_lw_l$ and replace it with $w_{l-1}w_l$, it decreases $f$ by $2$.

\begin{figure}[h]
\begin{center}
\begin{tikzpicture}[auto, xscale = .6, yscale = 1.2]
\tikzstyle{vertex}=[inner sep=0.5mm, thick, circle, draw=black!100, fill=black!100]

\node (z1) at (0,1) [vertex] {};
\node (w1) at (0,0) [vertex] {};
\node (z2) at (1,1) [vertex] {};
\node (w2) at (1,0) [vertex] {};
\node (z3) at (2,1) [vertex] {};
\node (w3) at (2,0) [vertex] {};
\node (z4) at (4,1) [vertex] {};
\node (w4) at (4,0) [vertex] {};
\draw (z1) -- (w1);
\draw (z2) -- (w2);
\draw (z3) -- (w3);
\draw (z4) -- (w4);
\node at (0,1.3) {$z_1$};
\node at (0,-0.3) {$w_1$};
\node at (1,1.3) {$z_2$};
\node at (1,-0.3) {$w_2$};
\node at (2,1.3) {$z_3$};
\node at (2,-0.3) {$w_3$};
\node at (4.3,1.25) {$z_{\sqrt[4]{4n}}$};
\node at (4.3,-0.35) {$w_{\sqrt[4]{4n}}$};
\draw (2.5,0.5) -- (3.5,0.5) [thick,dotted];
\draw (5.5,0.5) -- (6.5,0.5) [-angle 90];

\node (z1) at (8,1) [vertex] {};
\node (w1) at (8,0) [vertex] {};
\node (z2) at (9,1) [vertex] {};
\node (w2) at (9,0) [vertex] {};
\node (z3) at (10,1) [vertex] {};
\node (w3) at (10,0) [vertex] {};
\node (z4) at (12,1) [vertex] {};
\node (w4) at (12,0) [vertex] {};
\draw (z1) -- (w1);
\draw (w1) -- (w2);
\draw (z3) -- (w3);
\draw (z4) -- (w4);
\node at (8,1.3) {$z_1$};
\node at (8,-0.3) {$w_1$};
\node at (9,1.3) {$z_2$};
\node at (9,-0.3) {$w_2$};
\node at (10,1.3) {$z_3$};
\node at (10,-0.3) {$w_3$};
\node at (12.3,1.25) {$z_{\sqrt[4]{4n}}$};
\node at (12.3,-0.35) {$w_{\sqrt[4]{4n}}$};
\draw (10.5,0.5) -- (11.5,0.5) [thick,dotted];
\draw (13.5,0.5) -- (14.5,0.5) [-angle 90];

\node (z1) at (16,1) [vertex] {};
\node (w1) at (16,0) [vertex] {};
\node (z2) at (17,1) [vertex] {};
\node (w2) at (17,0) [vertex] {};
\node (z3) at (18,1) [vertex] {};
\node (w3) at (18,0) [vertex] {};
\node (z4) at (20,1) [vertex] {};
\node (w4) at (20,0) [vertex] {};
\draw (z1) -- (w1);
\draw (w1) -- (w2);
\draw (w2) -- (w3);
\draw (z4) -- (w4);
\node at (16,1.3) {$z_1$};
\node at (16,-0.3) {$w_1$};
\node at (17,1.3) {$z_2$};
\node at (17,-0.3) {$w_2$};
\node at (18,1.3) {$z_3$};
\node at (18,-0.3) {$w_3$};
\node at (20.3,1.25) {$z_{\sqrt[4]{4n}}$};
\node at (20.3,-0.35) {$w_{\sqrt[4]{4n}}$};
\draw (18.5,0.5) -- (19.5,0.5) [thick,dotted];
\draw (21.5,0.5) -- (22.5,0.5) [-angle 90];

\draw (23.5,0.5) -- (24.5,0.5) [thick,dotted];

\end{tikzpicture}
\end{center}
\caption{Path accumulation of $V_4$.}
\label{path}
\end{figure}
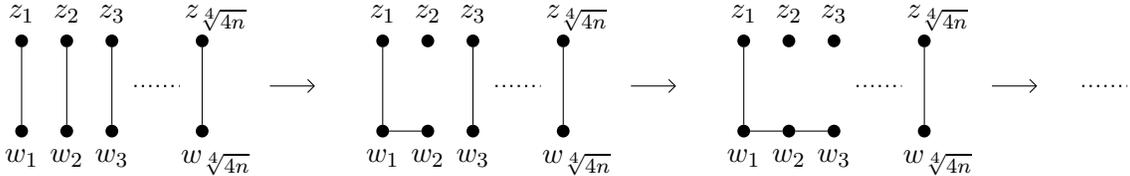

Since the modification has $\sqrt[4]{4n}-1$ steps, we conclude that $F_n$ contains every even number between $f(G_{i,j,k})$ and $f(G_{i,j,k})-2(\sqrt[4]{4n}-1)$ as required.
\end{proof}

The proof for part $(ii)$ is by the same method but with more care. Observe that, for $n$ odd, the sequence obtained by adding edges to the first part alternates between odd and even numbers. Since the odd and even subsequences have larger gaps than before, of size at most $2(n-2)$, we shall follow the same proof for each subsequence by taking larger matchings.

\begin{proof}[Proof of Theorem~\ref{main2} part (ii)]
Let $n$ be odd. First, we shall note that this is best possible up to the second order term. Let $s$ be the maximum $t$ such that $b_{t-1}+2<a_{t+1}$ and so $s\sim \sqrt{2n}$. We claim that at least one of $b_{s-1}+1$ and $b_{s-1}+2$ is not a member of $F_n$. Suppose for contradiction that they are both in $F_n$. Since they lie in $(b_{s-1},b_{s+1})$, it follows from Theorem~\ref{main1-2} that there are two graphs with $n$ vertices and $s$ edges containing $b_{s-1}+1$ and $b_{s-1}+2$ frustrated triangles respectively contradicting Proposition~\ref{parity}. We are then done since $b_{s-1}=(s-1)(n-2)\sim \sqrt{2}n^{3/2}$.

Now, for each number $m\in[\sqrt{2}n^{3/2}+2\sqrt[4]{8}n^{5/4},\binom{n}{3}-(\sqrt{2}n^{3/2}+2\sqrt[4]{8}n^{5/4})]$, our aim is to construct a graph $G$ on $n$ vertices with $f(G)=m$. Since $F_n$ is symmetric about $\frac{1}{2}\binom{n}{3}$, it is sufficient to do so for each $m\in[\sqrt{2}n^{3/2}+2\sqrt[4]{8}n^{5/4},\frac{1}{2}\binom{n}{3}]$. Let $G$ be a graph on $n$ vertices containing $\lceil\sqrt{2n}\rceil+2\lceil\sqrt[4]{8n}\rceil$ independent edges. For simplicity, we shall write $s_1=\lceil\sqrt{2n}\rceil$ and $s_2=\lceil\sqrt[4]{8n}\rceil$. Let $H$ be a graph obtained from $G$ by adding all the edges between the isolated vertices of $G$, i.e. $H$ is a disjoint union of a complete graph of order $r=n-2s_1-4s_2$ and a matching of size $s_1+2s_2$. Then $f(G)=(s_1+2s_2)(n-2)$ and $f(H)=\binom{r}{3}+\binom{r}{2}(n-r)+(s_1+2s_2)(n-2)$. It is sufficient to show that every number between $f(G)$ and $f(H)-(n-2)$ belongs to $F_n$ since $f(G)\leq \sqrt{2}n^{3/2}+2\sqrt[4]{8}n^{5/4}$ and $f(H)-(n-2)\geq \binom{n-2\sqrt{2n}-4\sqrt[4]{8n}}{3}\geq \frac{1}{2}\binom{n}{3}$ for sufficiently large $n$.

We shall break $G$ into four parts and modify each part separately to obtain new graphs. Let $V(G)=V_1\sqcup V_2\sqcup V_3\sqcup V_4$ where
\begin{itemize}
	\item	$G[V_1]$ is empty with $|V_1|=r$,
	\item $G[V_2]$ is matching of size $\sqrt{2n}$ with $|V_2|=2\sqrt{2n}$, and
	\item $G[V_3]$ and $G[V_4]$ are matchings of size $\sqrt[4]{8n}$ with $|V_3|=|V_4|=2\sqrt[4]{8n}$, and
	\item $G[V_i,V_j]$ is empty for all $i\not=j$.
\end{itemize}
Now we add an edge one by one inside $G[V_1]$ until we obtain $H$. Each time we add an edge, $f$ can change by at most $n-2$ by the proof of Proposition~\ref{parity}. Hence, $F_n$ contains a sequence $f(G)=f(G_1),f(G_2),\dots,f(G_{\binom{r}{2}})=f(H)$ with $|f(G_i)-f(G_{i+1})|\leq n-2$ for all $i$. By Proposition~\ref{parity}, this sequence alternates between odd and even numbers. We claim that it is sufficient to show that $F_n$ contains
	$$f(G_i),f(G_i)-2,f(G_i)-4,\dots,f(G_i)-2(n-2)$$
for all $i$. Indeed, let $m\in[f(G),f(H)-(n-2)]$. Then there is an $i$ such that $f(G_i)$ has the same parity as $m$ and $0\leq f(G_i)-m\leq 2(n-2)$ since $|f(G_j)-f(G_{j+2})|\leq 2(n-2)$ and $f(G_j),f(G_{j+2})$ have the same parity for all $j$. Hence, $m\in\{f(G_i),f(G_i)-2,f(G_i)-4,\dots,f(G_i)-2(n-2)\}\subset F_n$ as required. The rest of the proof is similar to the previous proof.

Let us fix $i$. By construction, $V(G_i)=V_1\sqcup V_2\sqcup V_3\sqcup V_4$ where $V_1$ induces $i$ edges, $V_2$ induces a matching of size $\sqrt{2n}$ and each of $V_3,V_4$ induces a matching of size $\sqrt[4]{8n}$. We shall modify $G_i[V_2],G_i[V_3]$ and $G_i[V_4]$ to obtain new graphs. Let $\{x_1y_1,\dots,x_{\sqrt{2n}}y_{\sqrt{2n}}\}$ be the matching inside $V_2$. First, we delete $x_2y_2$ and replace it with $x_1y_2$. This decreases $f$ by $2$. Next, we delete $x_3y_3$ and replace it with $x_1y_3$ which decreases $f$ by $4$ more. We continue similarly. When we delete $x_jy_j$ and replace it with $x_1y_j$, it decreases $f$ by $2(j-1)$.

Therefore, $F_n$ contains a decreasing sequence $f(G_i)=f(G_{i,1}),f(G_{i,2}),\dots, f(G_{i,\sqrt{2n}})$ with $|f(G_{i,j})-f(G_{i,j+1})|\leq 2(\sqrt{2n}-1)$ for all $j$. Moreover, $f(G_{i,\sqrt{2n}})$ is larger than $f(G_i)-2(n-2)$ by at most $2(\sqrt{2n}-1)$. Indeed,
	\begin{align*}
		f(G_{i,\lceil\sqrt{2n}\rceil})
		&=f(G_i)-(2+4+\dots+2(\lceil\sqrt{2n}\rceil-1))\\
		&=f(G_i)-(\lceil\sqrt{2n}\rceil-1)\lceil\sqrt{2n}\rceil\\
		&\leq f(G_i)-2(n-2)+2(\lceil\sqrt{2n}\rceil-1).
	\end{align*}
Hence, it is sufficient to show that $F_n$ contains
	$$f(G_{i,j}),f(G_{i,j})-2,f(G_{i,j})-4,\dots,f(G_{i,j})-2(\sqrt{2n}-1)$$
for all $j$. Let us fix $j$ and we shall modify the third part of the graph $G_{i,j}$ as follows. Let $\{x'_1y'_1,\dots,x'_{\sqrt[4]{8n}}y'_{\sqrt[4]{8n}}\}$ be the matching inside $V_3$. First, we delete $x'_2y'_2$ and replace it with $x'_1y'_2$. This decreases $f$ by $2$. Next, we delete $x'_3y'_3$ and replace it with $x'_1y'_3$ which decreases $f$ by $4$ more. We continue similarly. When we delete $x'_ky'_k$ and replace it with $x'_1y'_k$, it decreases $f$ by $2(k-1)$.

Therefore, $F_n$ contains a decreasing sequence $f(G_{i,j})=f(G_{i,j,1}),f(G_{i,j,2}),\dots, f(G_{i,j,\sqrt[4]{8n}})$ with $|f(G_{i,j,k})-f(G_{i,j,k+1})|\leq 2(\sqrt[4]{8n}-1)$ for all $k$. Moreover, $f(G_{i,j,\sqrt[4]{8n}})$ is larger than $f(G_{i,j})-2(\sqrt{2n}-1)$ by at most $2(\sqrt[4]{8n}-1)$. Indeed,
	\begin{align*}
		f(G_{i,j,\lceil\sqrt[4]{8n}\rceil})
		&=f(G_{i,j})-(2+4+\dots+2(\lceil\sqrt[4]{8n}\rceil-1))\\
		&=f(G_{i,j})-(\lceil\sqrt[4]{8n}\rceil-1)\lceil\sqrt[4]{8n}\rceil\\
		&\leq f(G_{i,j})-2(\lceil\sqrt{2n}\rceil-1)+2(\lceil\sqrt[4]{8n}\rceil-1).
	\end{align*}
Hence, it is sufficient to show that $F_n$ contains
	$$f(G_{i,j,k}),f(G_{i,j,k})-2,f(G_{i,j,k})-4,\dots,f(G_{i,j,k})-2(\sqrt[4]{8n}-1)$$
for all $k$. To prove this, let us fix $k$ and we shall modify the fourth part of the graph $G_{i,j,k}$ as follows. Let $\{z_1w_1,\dots,z_{\sqrt[4]{8n}}w_{\sqrt[4]{8n}}\}$ be the matching inside $V_4$. First, we delete $z_2w_2$ and replace it with $w_1w_2$. This decreases $f$ by $2$. Next, we delete $z_3w_3$ and replace it with $w_2w_3$ which decreases $f$ by $2$ more. We continue similarly. When we delete $z_lw_l$ and replace it with $w_{l-1}w_l$, it decreases $f$ by $2$. We are done since the modification has $\sqrt[4]{8n}-1$ steps.
\end{proof}

%%%%%%%%%%%%%%%%%%%%%%%%%%%%%%%%%%%%%%%%%%%%%%%%%%%%%%%%%%%%%%%%%%%%%%%%%%%%%%%%%%%%%%%%%%%%%%%%%%%

\section{Proof of Theorem~\ref{main3}}
\label{proof3}

We have seen a special case of Theorem~\ref{main3} from the proof of Proposition~\ref{formula} part $(iii)$ that, amongst the graphs with $n$ vertices and $e\leq n-1$ edges, the $e$-star is the only graph with the minimum number of frustrated triangles. We deduce Theorem~\ref{main3} using Theorem~\ref{main1-2} together with this fact.

\begin{proof}[Proof of Theorem~\ref{main3}]
By Proposition~\ref{t_G}, we see that $G$ can be obtained from a complete bipartite graph by flipping $t_G$ pairs of vertices. Therefore, $e\in[x(n-x)-t_G,x(n-x)+t_G]$ for some $0\leq x\leq \frac{n}{2}$, and hence $g(e)\leq t_G$ by the definition of $g$. Now, by Theorem~\ref{main1-2} part $(i)$, we have $f(G)\geq a_{g(e)}$ as required.

Next, we show that there is a graph $G$ with $n$ vertices and $e\leq \lfloor\frac{n^2}{4}\rfloor+\lfloor\frac{n-1}{2}\rfloor-1$ edges such that $f(G)=a_{g(e)}$. We see that $g(e)$, the distance from $e$ to the sequence $(c_x)$, is at most $\lfloor\frac{n-1}{2}\rfloor$. By the definition of $g$, there is $0\leq x\leq \frac{n}{2}$ such that $g(e)=|e-c_x|$. We shall distinguish two cases.

If $e=c_x-g(e)$ then let $G$ be the graph obtained from the complete bipartite graph $K_{x,n-x}$ by deleting a $g(e)$-star with center in the smaller side and leaves in the larger side of $K_{x,n-x}$. This is possible since $g(e)\leq \lfloor\frac{n-1}{2}\rfloor\leq n-x$, and if $x=0$ then $g(e)=0$. By Corollary~\ref{flip3}, $G\sim g(e)$-star, and hence $f(G)=a_{g(e)}$.

On the other hand, if $e=c_x+g(e)$ then let $G$ be the graph obtained from the complete bipartite graph $K_{x,n-x}$ by adding a $g(e)$-star inside the larger side of $K_{x,n-x}$. This is possible since $g(e)+1\leq \lfloor\frac{n-1}{2}\rfloor+1\leq n-x$. By Corollary~\ref{flip3}, $G\sim g(e)$-star, and hence $f(G)=a_{g(e)}$.

We shall now show that these are the only extremal graphs. Suppose that $G$ is a graph with $n$ vertices and $e\leq \lfloor\frac{n^2}{4}\rfloor+\lfloor\frac{n-1}{2}\rfloor-1$ edges such that $f(G)=a_{g(e)}$. We know that $g(e)\leq\lfloor\frac{n-1}{2}\rfloor$. First, we consider the case $g(e)=\lfloor\frac{n-1}{2}\rfloor$. This can only happen when $e=\lfloor\frac{n-1}{2}\rfloor$ or $\lceil\frac{n-1}{2}\rceil$. If $e=\lfloor\frac{n-1}{2}\rfloor$ then $e\leq n-1$ and so $G$ is the $e$-star. Since $g(e)=\lfloor\frac{n-1}{2}\rfloor=e$, we see that $G$ is obtained from $K_{0,n}$ by adding a $g(e)$-star. Similarly, if $e=\lceil\frac{n-1}{2}\rceil$ then $e\leq n-1$ and so $G$ is the $e$-star. Since $g(e)=\lfloor\frac{n-1}{2}\rfloor=(n-1)-e$, we see that $G$ is obtained from $K_{1,n}$ by deleting a $g(e)$-star.

%In this case, $e\leq n-1$ and so $G$ is the $e$-star. Since $g(e)=\lfloor\frac{n-1}{2}\rfloor=e$, or $g(e)=\lfloor\frac{n-1}{2}\rfloor=(n-1)-e$, we see that $G$ is obtained from $K_{0,n}$ by adding a $g(e)$-star, or from $K_{1,n}$ by deleting a $g(e)$-star.

Now we may assume that $g(e)\leq\lfloor\frac{n-1}{2}\rfloor-1$. Therefore, $f(G)=a_{g(e)}<a_{g(e)+1}$ since $a_t$ is increasing when $t\leq \frac{n-1}{2}$. By Theorem~\ref{main1-2} part $(i)$, we conclude that $t_G\leq g(e)$. Recall from the beginning of this proof that $t_G\geq g(e)$ and so we must have $t_G=g(e)$. By the definition of $t_G$, $G\sim H$ for some graph $H$ with $g(e)$ edges. Since $f(H)=f(G)=f(g(e)\text{-star})$ and $g(e)\leq n-1$, we conclude that $H$ must be the $g(e)$-star. By Corollary~\ref{flip3}, we have that $G$ can be obtained from a complete bipartite graph by flipping $g(e)$ pairs of vertices forming a star. Since $g(e)$ is the distance from $e$ to the sequence of number of edges of a complete bipartite graph, the $g(e)$ pairs of vertices that we flip must be all edges or all nonedges.
\end{proof}

Let us remark that there are at most two extremal graphs for each $e\leq \lfloor\frac{n^2}{4}\rfloor+\lfloor\frac{n-1}{2}\rfloor-1$. Indeed, there are at most two $c_x$'s such that $g(e)=|e-c_x|$. The size of the complete bipartite graph is determined by this $c_x$ while the choice of deleting or adding edges is determined by the sign of $e-c_x$.

Since $F_n$ is symmetric, we have, by taking complements, the corresponding result for maximising the number of frustrated triangles amongst the graphs with a fixed number of edges.

\begin{cor}
\label{max}
If $G$ is a graph on $n$ vertices with $e$ edges then $f(G)\leq \binom{n}{3}-a_{g\left(\binom{n}{2}-e\right)}$. Moreover, the bound can be achieved when $e\geq \binom{n}{2}-\lfloor\frac{n^2}{4}\rfloor-\lfloor\frac{n-1}{2}\rfloor+1$. In this case, the extremal graphs are obtained from a disjoint union of two complete graphs of orders summing to $n$ by deleting or adding $g\left(\binom{n}{2}-e\right)$ edges forming a star.\qed
\end{cor}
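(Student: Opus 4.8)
The plan is to deduce Corollary~\ref{max} from Theorem~\ref{main3} by complementation, using the two facts recorded in the introduction and in Proposition~\ref{formula}: that $f(G)+f(\overline G)=\binom n3$, and that $e(\overline G)=\binom n2-e$. So the whole statement will be read off from the version of Theorem~\ref{main3} applied to $\overline G$.

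First I would prove the inequality. For any graph $G$ on $n$ vertices with $e$ edges, the complement $\overline G$ has $\binom n2-e$ edges, so Theorem~\ref{main3} gives $f(\overline G)\ge a_{g(\binom n2-e)}$. Substituting into $f(G)=\binom n3-f(\overline G)$ yields $f(G)\le \binom n3-a_{g(\binom n2-e)}$, as claimed.

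Next comes the extremal characterisation. The condition $e\ge \binom n2-\lfloor n^2/4\rfloor-\lfloor (n-1)/2\rfloor+1$ is exactly $e(\overline G)=\binom n2-e\le \lfloor n^2/4\rfloor+\lfloor (n-1)/2\rfloor-1$, so the ``moreover'' part of Theorem~\ref{main3} applies to $\overline G$. Hence $f(G)$ attains the upper bound if and only if $f(\overline G)=a_{g(\binom n2-e)}$, which happens if and only if $\overline G$ is obtained from some complete bipartite graph $K_{x,n-x}$ by flipping $g(\binom n2-e)$ pairs of vertices that form a star and are all edges (resp.\ all nonedges) of $K_{x,n-x}$. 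Complementing both sides: $G$ is obtained from $\overline{K_{x,n-x}}=K_x\sqcup K_{n-x}$, a disjoint union of two complete graphs of orders summing to $n$, by flipping the same set of $g(\binom n2-e)$ pairs — still a star — which are now all nonedges (resp.\ all edges); that is, by deleting or adding $g(\binom n2-e)$ edges forming a star. This is precisely the asserted description.

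The argument is essentially bookkeeping, and the only point I would be careful about is that taking complements commutes with flipping a fixed set of pairs and does not change the ``star'' shape of the flipped set, while it interchanges ``all flipped pairs are edges'' with ``all flipped pairs are nonedges'' — so that ``deleting or adding $g$ edges forming a star'' is a self-complementary description, consistent with Theorem~\ref{main3}. I would also note, to align the two distance functions, that $K_x\sqcup K_{n-x}$ has $\binom n2-c_x$ edges, so the distance from $e$ to the family of disjoint unions of two cliques equals $\min_x\bigl|(\binom n2-e)-c_x\bigr|=g(\binom n2-e)$, exactly the quantity appearing in the statement.
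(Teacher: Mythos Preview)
Your proof is correct and is exactly the approach the paper intends: the corollary is stated with a \qed and the preceding sentence says it follows ``by taking complements'' from Theorem~\ref{main3}, which is precisely what you do. Your additional care in checking that complementation swaps ``deleting'' and ``adding'' while preserving the star shape is a nice touch that the paper leaves implicit.
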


%%%%%%%%%%%%%%%%%%%%%%%%%%%%%%%%%%%%%%%%%%%%%%%%%%%%%%%%%%%%%%%%%%%%%%%%%%%%%%%%%%%%%%%%%%%%%%%%%%%

\section{Open problems}
\label{conclude}
We conclude by mentioning questions and conjectures that would merit further study. Let $f(n)$ be the maximum nonmember of $F_n$ which is less than $\frac{1}{2}\binom{n}{3}$. Theorem~\ref{main2} shows that $f(n)=n^{3/2}+O(n^{5/4})$ for $n$ even and $f(n)=\sqrt{2}n^{3/2}+O(n^{5/4})$ for $n$ odd. A careful modification of the construction could solve the following problem.

\begin{problem}
Determine the second order term of $f(n)$ in both cases.
\end{problem}

Let $t\leq t_{max}$. Combining Theorem~\ref{main1} part $(ii)$ and Corollary~\ref{flip3}, we have
	$$F_n\cap[a_t,b_t]=\{f(G):e(G)=t\}.$$
Corollary~\ref{possible} tells us that $F_n\cap[a_t,b_t]\subset\{a_t,a_t+2,\dots,b_t-2,b_t\}$. However, it is not true that every number in $\{a_t,a_t+2,\dots,b_t-2,b_t\}$ appears in $F_n$. For example, by considering all graphs with $4$ edges, we see that $\{f(G):e(G)=4\}=\{a_4,a_4+2,\dots,b_4-2,b_4\}-\{a_4+2\}$. Therefore, we ask the following question.

\begin{problem}
Determine the set $[a_t,b_t]-\{f(G):e(G)=t\}$ for each $t\leq t_{max}$.
\end{problem}

We have seen from the proof of Proposition~\ref{formula} part $(iii)$ that, amongst the graphs with $n$ vertices and $e\leq n/2$ edges, the $e$-matching is the only graph with the maximum number of frustrated triangles. Furthermore, Theorem~\ref{main3} and Corollary~\ref{max} partially answer the following question.

\begin{problem}
Given the number of vertices and the number of edges, which graphs on $n$ vertices maximise/minimise the number of frustrated triangles?
\end{problem}

Note that extremal graphs with minimum number of triangles of Rozborov~\cite{Razborov2008} provide an asymptotic answer to this question.

There are several ways one could generalise the definition of frustrated triangles. For instance, we can replace triangle with another subgraph. The most natural generalisation is to cycles. For $k\geq 3$ and vertices $v_1,v_2,\dots,v_k$ of a graph $G$, we say that a cyclic ordering $v_1v_2\dots v_k$ is a \emph{frustrated $k$-cycle} if $$\big|\{v_1v_2,v_2v_3,\dots,v_{k-1}v_k,v_kv_1\}\cap E(G)\big|$$ is odd. Let $f_k(G)$ be the number of frustrated $k$-cycles in a graph $G$. We conjecture the following generalisation of Theorem~\ref{main1-2}.

\begin{conjecture}
For every $k\geq 3$, $t \geq 0$, and all sufficiently large $n$, the following holds. If $f_k(G) < f_k\big((t+1)\text{-star on $n$ vertices}\big)$, then either $G$ or $\overline{G}$ can be obtained from a complete bipartite graph by flipping at most $t$ edges/non-edges.
\end{conjecture}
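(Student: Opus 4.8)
The plan is to rerun the argument of Theorem~\ref{main1-2}\,$(i)$ with $f$ replaced by $f_k$, since its two engines have $k$-cycle analogues. First, vertex flipping preserves $f_k$: a cyclic ordering $v_1\dots v_k$ through $v$ uses exactly the two cycle-edges at $v$, both of which are flipped when we flip $v$, so the parity of the number of present cycle-edges is unchanged. Hence the relation $\sim$ and the quantity $t_G=\min\{e(H):G\sim H\}$ behave exactly as in Section~\ref{prelim}, and complete bipartite graphs satisfy $f_k=0$. Second, if $v$ is an isolated vertex of $G$ then a cyclic ordering through $v$ is frustrated iff the length-$(k-2)$ path joining the two cycle-neighbours of $v$ carries an odd number of edges, so, writing $\nu_k(H)$ for the number of \emph{frustrated open paths on $k-1$ vertices} of $H$ (sequences $w_1w_2\dots w_{k-1}$, up to reversal, with an odd number of the pairs $w_iw_{i+1}$ edges), we get the splitting identity
$$f_k(G)=f_k(G-v)+\nu_k(G-v).$$
For $k=3$ one has $\nu_3=e$, which is why the proofs of Proposition~\ref{coffee} and Theorem~\ref{main1-2} only needed to track the number of edges; for $k\geq 4$, $\nu_k$ is a genuinely new parameter that must be tracked alongside $f_k$.

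Next, the role of the complement. When $k$ is odd a cyclic ordering is frustrated in $G$ iff it is not frustrated in $\overline G$, so $f_k(G)+f_k(\overline G)$ is constant and "$f_k(G)$ small" already forces $G$ itself (not $\overline G$) to be near-bipartite. When $k$ is even $f_k(G)=f_k(\overline G)$, and the $f_k$-ground states are the complete bipartite graphs \emph{and} the disjoint unions of two cliques — but the latter are precisely the complements of the former, which is why the conjectured dichotomy is phrased as "$G$ or $\overline G$". (Establishing that these are the only $f_k$-ground states when $k$ is even is a small preliminary in its own right.) Writing $a^{(k,n)}_t\defeq f_k(t\text{-star on }n\text{ vertices})$, a short computation gives $a^{(k,n)}_t\sim c_k\,t\,n^{k-2}$ for $t=o(n)$, and by Corollary~\ref{flip3} the conjecture is equivalent to: if $f_k(G)<a^{(k,n)}_{t+1}$ then $\min\{t_G,t_{\overline G}\}\leq t$.

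The main argument would be an induction on $n$ modelled on the proof of Theorem~\ref{main1-2}\,$(i)$. Flip vertices so that some $v$ is isolated, put $G'=G-v$, and apply the splitting identity; note that flipping preserves $f_k$, $t_G$ and $t_{\overline G}$ simultaneously, so this is legitimate. If $e(G')\leq t$ we are done, since $t_G\leq e(G)=e(G')$. Otherwise $e(G')\geq t+1$, and the plan is to combine (a) a lower bound for $\nu_k(G')$ in terms of how far $G'$ is from complete bipartite (the analogue of $\nu_3=e$), with (b) the inductive hypothesis applied to $G'$, and (c) the analogue of the monotonicity inequality that closed the $k=3$ proof (there: $2st\geq2s^2$ for $0\leq s\leq t$), to contradict $f_k(G)<a^{(k,n)}_{t+1}$. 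I expect (a) to be the principal obstacle: it is itself a Theorem~\ref{main1}-type stability statement one level down — determining which graphs minimise the number of frustrated open $(k-1)$-paths for a given $t_{G'}$, with the star as extremiser in the relevant range — so the induction on $n$ is entangled with an induction on $k$ through the pair $(f_k,\nu_k)$, and, because the conjectured threshold is the \emph{exact} value $f_k\big((t+1)\text{-star}\big)$ rather than merely a constant multiple of $t\,n^{k-2}$, the bounds at each step must be kept sharp.

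An alternative route that bypasses $\nu_k$ is a direct extremal argument: show the $(t+1)$-star uniquely minimises $f_k$ among all graphs with $\min\{t_G,t_{\overline G}\}\geq t+1$. Here one extends a frustrated triangle $\{u,v,w\}$ of $G$ (which, after flipping, one may take to be a triangle) to many frustrated $k$-cycles by fixing an arbitrary cyclic arrangement of $k-3$ further vertices and permuting $u,v,w$ among the three slots adjacent to the "seam"; a parity count shows that among these permutations one produces a frustrated $k$-cycle \emph{unless} the two seam-vertices are both complete or both anticomplete to $\{u,v,w\}$. Turning this into the exact extremal bound requires controlling these exceptional configurations globally when $G$ has many frustrated triangles, which is precisely where the difficulty reappears.
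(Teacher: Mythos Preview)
The paper does not prove this statement: it is presented in Section~\ref{conclude} as an open conjecture, with no proof or proof sketch given. There is therefore no ``paper's own proof'' against which to compare your proposal.

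What you have written is a programme, not a proof, and you are explicit about this. The ingredients you set up are correct: vertex-flipping preserves $f_k$ for the reason you give; the splitting identity $f_k(G)=f_k(G-v)+\nu_k(G-v)$ holds for an isolated $v$; the dichotomy between odd and even $k$ (with $f_k(\overline G)=f_k(G)$ for even $k$, forcing the ``$G$ or $\overline G$'' formulation) is right; and the asymptotic $a^{(k,n)}_t\sim c_k\,t\,n^{k-2}$ is a straightforward count. You have also correctly isolated the genuine obstruction: for $k=3$ the proof of Theorem~\ref{main1-2}\,(i) succeeds because $\nu_3=e$ is exactly the edge count, so the needed lower bound $\nu_3(G')\geq x(n-1-x)-s$ is immediate, whereas for $k\geq 4$ the quantity $\nu_k$ is a new invariant and the required lower bound on $\nu_k(G')$ in terms of $t_{G'}$ is itself a stability statement of the same type one level down. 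Your alternative ``direct extremal'' route likewise locates, but does not close, this gap.

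In short, your proposal is a coherent plan of attack that identifies the right difficulty; it does not resolve the conjecture, and neither does the paper.
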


Note that when $n$ is odd, this should hold for just $G$ instead of `$G$ or $\overline{G}$'.

We could also try to define frustrated triangles in hypergraphs and study the analogue set. One thing to note is that if we wish to proceed along the same method then our new definition should at least give us Lemma~\ref{flip}.

%%%%%%%%%%%%%%%%%%%%%%%%%%%%%%%%%%%%%%%%%%%%%%%%%%%%%%%%%%%%%%%%%%%%%%%%%%%%%%%%%%%%%%%%%%%%%%%%%%%

\section*{Acknowledgements}
We would like to thank Kamil Popielarz for helpful discussions.

%%%%%%%%%%%%%%%%%%%%%%%%%%%%%%%%%%%%%%%%%%%%%%%%%%%%%%%%%%%%%%%%%%%%%%%%%%%%%%%%%%%%%%%%%%%%%%%%%%%

\bibliographystyle{plain}
\bibliography{frustrated_triangles}

\end{document}